\algnewcommand{\Initialize}[1]{%
	\State \textbf{Initialization:}
	\Statex {\raggedright #1}
}
\newtheorem{assumption}{Assumption}
\newtheorem{theorem}{Theorem}
\newtheorem{lemma}{Lemma}
\newtheorem{proposition}{Proposition}
\newtheorem{definition}{Definition}
\theoremstyle{plain}
\newtheorem{remark}{Remark}
\newcommand{\hdk}[1]{{\color{black}#1}}
\title{\LARGE \bf
An Incremental Gradient Method for Large-scale Distributed Nonlinearly Constrained Optimization }
\author{Harshal D. Kaushik$^1$ and Farzad Yousefian$^2$
\thanks{*Farzad Yousefian gratefully acknowledges the support of the NSF through CAREER grant ECCS$-1944500$\hdk{.}}
\thanks{$^1$Doctoral Candidate in the School of Industrial Engineering \& Management, Oklahoma State University, Stillwater, OK 74074, USA  {\tt\small harshal.kaushik@okstate.edu}}
\thanks{$^2$Assistant Professor in the School of Industrial Engineering \& Management, Oklahoma State University, Stillwater, OK 74074, USA  {\tt\small  farzad.yousefian@okstate.edu}}
}
\begin{document}

\maketitle
\thispagestyle{empty}
\pagestyle{empty}

\begin{abstract}
Motivated by applications arising from sensor networks and machine learning, we consider the problem of minimizing a finite sum of nondifferentiable convex functions where each component function is associated with an agent and a hard-to-project constraint set. Among well-known avenues to address finite sum problems is the class of incremental gradient (IG) methods where a single component function is selected at each iteration in a cyclic or randomized manner. When the problem is constrained, the existing IG schemes (including projected IG, proximal IAG, and SAGA) require a projection step onto the feasible set at each iteration. Consequently, the performance of these schemes is afflicted with costly projections when the problem includes: (1) nonlinear constraints, or (2) a large number of linear constraints. Our focus in this paper lies in addressing both of these challenges. We develop an algorithm called averaged iteratively regularized incremental gradient (aIR-IG) that does not involve any hard-to-project computation. Under mild assumptions, we derive non-asymptotic  rates of convergence for both suboptimality and infeasibility metrics. Numerically, we show that the proposed scheme outperforms the standard \hdk{projected} IG methods on  distributed soft-margin support vector machine problems.
\end{abstract}

\section{Introduction}

\begin{table*}[h]
	\vspace{0.2cm}
	\caption{{Comparison of {incremental gradient} schemes for solving finite sum problems.} }
	\centering
	\begin{tabular}{ l  l l  l l  l }
		\toprule			
		\begin{minipage}{0.4cm} Reference \end{minipage} & Scheme & Problem class & Problem formulation & Convergence rate(s) &  Memory (per iter.) \\ 
		\cmidrule(r){1-6}
		\begin{minipage}{0.4cm} { \centering \cite{nedichThesis} }\end{minipage} &  Projected IG   &    {$C_0^0$}   &   {$\min_{x\in X} \ \textstyle\sum_{i = 1}^m f_i(x)$}  & $\mathcal{O}\left(\tfrac{1}{\sqrt{k}}\right)$ &$\mathcal{O}(n)$  \\
		  \cite{BlattHeroGauchman2007, GurbuzbalanOzdaglarParrilo2017}    & IAG &    $C_{\mu,L}^{1,1}$   &   {$\min_{x\in \mathbb{R}^n} \ \textstyle\sum_{i = 1}^m f_i(x)$}  &linear & $\mathcal{O}(mn)$\\
		  \cite{DefazioBachJulien2014}    & SAGA& $C_{0,L}^{1,1}$ , $C_{\mu,L}^{1,1}$   &   {$\min_{x\in X} \ \textstyle\sum_{i = 1}^m f_i(x)$}   & ${\cal O} \left(\tfrac{1}{{k}}\right)$, linear & $\mathcal{O}(mn)$ \\
		 \cite{VanliGurbuzbalanOzdaglar2018}  &  Proximal IAG  &  $C_{\mu,L}^{1,1}$  &   {$\min_{x\in X} \ \textstyle\sum_{i = 1}^m f_i(x)$}  & linear &$\mathcal{O}(mn)$ \\
		  \cite{GurbuzbalanOzdaglarParrilo2019}   & \begin{minipage}{2cm} IG\end{minipage} &   $C_{0,L}^{2,1}, C_{\mu,L}^{2,1}$   & \begin{minipage}{2cm} \centering {$\min_{x\in \mathbb{R}^n} \ \textstyle\sum_{i = 1}^m f_i(x)$} \end{minipage} & ${\cal O} \left(\tfrac{1}{\sqrt{k}}\right)$,   ${\cal O} \left(\tfrac{1}{{k}}\right)$  & $\mathcal{O}(n)$\\
		\cmidrule(r){1-6}
		\begin{minipage} {0.9cm} \textbf{This}\\ \textbf{work} \end{minipage}  & \begin{minipage}{2.5cm}   aIR-IG \end{minipage} & \begin{minipage}{2.2cm}{{$C_{0}^{0}$}} \end{minipage} & \begin{minipage}{3.6cm}   {$\min_{x\in X} \ \textstyle\sum_{i = 1}^m f_i(x)$ \\  $\ \ \ h_i(x)\leq0 \ \ \forall i\in [m]$\\  $ A_ix = b_i \ \ \forall i\in [m]$\\  $x^{(j)}\geq 0\ \  \forall j \in J$ } \end{minipage} & \begin{minipage}{3.5cm}  {suboptimality: ${\cal O} \left( k^{-0.5+b}\right)$ \\ infeasibility: ${\cal O} \left(k^{-b}\right)$ \\ for an arbitrary $0 < b < 0.5$}\end{minipage}  &$\mathcal{O}(n)$ \\
		\bottomrule
	\end{tabular}
	\label{table:schemes_literature}
	\vspace*{-0.40cm}
\end{table*}

We consider a finite sum minimization subject to nonlinear inequality and linear equality functional constraints as follows:
\begin{alignat}{2}\label{initial_problem}\tag{$P$}
\underset{x\in \mathbb{R}^n }{\text{minimize }} \quad f(x) & \triangleq \textstyle\sum_{i=1}^m f_i(x) \\
 \text{subject to} \quad h_i(x) &\leq 0 \qquad && \hbox{for all } i \in \{1,\ldots,m\},\nonumber\\
  A_i x & = b_i && \hbox{for all } i \in \{1,\ldots,m\},\nonumber\\
x^{(j)}& \geq 0 && {\text{for all } j \in J \nonumber},\\
  x & \in X, \nonumber
\end{alignat}
where the component functions $f_i:\mathbb{R}^n \to \mathbb{R}$ and $h_i:\mathbb{R}^n \to \mathbb{R}$ are nonsmooth convex, $A_i \in \mathbb{R}^{d_i\times n} $, and $b_i \in \mathbb{R}^{d_i}$, for all $i \in \{1,\dots,m\}$. Also, $X\subseteq\mathbb{R}^n$ is an easy-to-project convex set and $J \subseteq \{1,\dots, n  \}$. The information about $f_i$, $h_i$, $A_i$, and $b_i$ is only known by agent $i$, while the sets $X$ and $J$ are known by all the agents. Parameters $n$, $m$, and $p\triangleq \textstyle\sum_{i=1}^md_i$ are possibly large. 
 Problem \eqref{initial_problem} arises in a breadth of applications including expected loss minimization in statistical learning \cite{RouxSchmidtBach2012} where $f_i$ is associated with a data block, as well as distributed optimization in wireless sensor networks where $f_i$ represents the local performance measure of the $i^{\text{th}}$ agent \cite{Rabbat2004}. One of the popular methods in addressing finite sum problems, in particular, in the unconstrained \hdk{regime}, is the class of incremental gradient (IG) methods where utilizing the additive structure of the problem, the algorithm cycles through the data blocks and updates the local estimates of the optimal solution in a sequential manner \cite{Bertsekas16}. While the first variants of IG schemes find their roots in addressing neural networks as early as in the '80s \cite{BertsekasIG11}, the complexity analysis of these schemes has been a trending research topic in the fields of control and machine learning in the past two decades. In addressing constrained problems with easy-to-project constraint sets, the projected incremental gradient (P-IG) method and its subgradient variant were developed \cite{NedBert2001}. In the smooth case, it is described as follows: given an initial point $x_{0,1} \in X$, where $X \subseteq \mathbb{R}^n$ denotes the constraint set, for each $k \geq 1$, consider the following update rule:
\begin{align*}
&x_{k,i+1}: = \mathcal{P}_X\left(x_{k,i}-\gamma_{k}\nabla f_i\left(x_{k,i}\right)\right) \quad \text{for all }i=1,\ldots,m,\\
&x_{k+1,1}: = x_{k,m+1} \qquad \text{for all }k \geq 0,
\end{align*} 
where $\mathcal{P}$ denotes the Euclidean projection operator and is defined as $\mathcal{P}_X(z)\triangleq \text{argmin}_{x\in X}\|x-z\|_2$ and $\gamma_k>0$ is the stepsize parameter. Recently, under the assumption of strong convexity and twice continuous differentiability of the objective function, the standard IG method was proved to converge with the rate $\mathcal{O}(1/k)$ in the unconstrained case \cite{GurbuzbalanOzdaglarParrilo2019}. This is an improvement to the previously known rate of $\mathcal{O}(1/\sqrt{k})$ for the merely convex case. Accelerated variants of IG schemes with provable convergence speeds were also developed, including  the incremental aggregated gradient method (IAG) \cite{BlattHeroGauchman2007,GurbuzbalanOzdaglarParrilo2017}, SAG \cite{RouxSchmidtBach2012}, and SAGA \cite{DefazioBachJulien2014}. While addressing the merely convex case, SAGA using averaging achieves a sublinear convergence rate, assuming strong convexity and smoothness, this is improved for non-averaging variants of SAGA and  IAG to a linear rate. 

\textbf{Existing gap:} Despite the faster rates of convergence in comparison with the standard IG method, the aforementioned methods require an excessive memory of $\mathcal{O}\left(mn\right)$ which limits their applications in the large-scale settings. Another existing challenge in the implementation of these schemes lies in addressing the hard-to-project constraints. Contending with the presence of constraints, projected (and more generally proximal) variants of the aforementioned IG schemes have been developed. However, the performance of these schemes is afflicted with costly projections when the problem includes: (1) nonlinear constraints, or (2) a large number of linear constraints. In the area of distributed optimization over networks, addressing constraints has been done to a limited extent through employing duality theory, projection, or penalty methods (see \cite{ChangNedichScaglione2014,AybatHamedani2016,HamedaniAybat2019,ScutariSun2019, NedichTatarenko2020}). We also note that a celebrated variant of the dual based schemes is the alternating direction method of multipliers (ADMM) (e.g., see \cite{MakhdoumiOzdaglar2017, KhatanaSalapaka2020,AybatHamedani2019, SunSun2020,TangDaoutidis2019}). Despite the recent advancements in this area, most ADMM methods cannot address inequality constraints with a separable structure as in \eqref{initial_problem}. Also, ADMM schemes often work under the premise that the communication graph is undirected. Indeed, despite the wide-spread application of the theory of duality and Lagrangian relaxation in addressing constrained problems in centralized regimes, there have been a limited work in the area of distributed optimization that can cope with hard-to-project constraints (see \cite{Bertsekas2015,AybatHamedani2016, HamedaniAybat2019} and the references therein). Nevertheless, the problem formulation \eqref{initial_problem} is not addressed in the aforementioned articles. \hdk{Recently,  primal-dual algorithms are proposed for finite sum convex optimization problems with conic constraints \cite{AybatHamedani2016, HamedaniAybat2020}. A recent work \cite{Jalilzadeh2021} introduced primal-dual incremental gradient method for nonsmooth convex optimization \hdk{problems}.} \hdk{Moreover}, iterative regularization (IR) has been employed as a new constraint-relaxation strategy in regimes where addressing the constraints are challenging (e.g., see \cite{AminiYousefian2018, YousefianNedichShanbhag2017, YousefianNedichShanbhag2020, KaushikYousefian2021}). Our work in this paper has been motivated by the recent success of the IR approach. To this end, our goal lies in employing the IR approach to develop an IG algorithm that can address formulation \eqref{initial_problem} without requiring any hard-to-project computation.  

{\bf Main contributions.} This work enables IG methods to address large-scale nonlinearly constrained optimization problems efficiently. To highlight our contributions, we have prepared Table \ref{table:schemes_literature}.  Our main contributions are as follows:

(i) We develop an algorithm called averaged iteratively regularized incremental gradient (aIR-IG) where at each iteration, a suitably defined stepsize and a regularization parameter are updated. Importantly, the proposed algorithm does not require any hard-to-project computation (see Algorithm \ref{alg:IR-IG_avg}). Also, aIR-IG is an incremental gradient scheme in the sense that at each iteration, only the local information of $f_i$, $h_i$, $A_i$, and $b_i$ is used by agent $i$ and agents communicate through a cycle graph. 

(ii) Under mild assumptions, we derive non-asymptotic rates of convergence for both suboptimality and infeasibility metrics. This is done through a careful choice of the stepsize and the regularization parameter that are updated iteratively. Importantly, the rate analysis in this paper is done under much weaker assumptions for functions $f_i$ in comparison with standard IG methods (see Table \ref{table:schemes_literature}).

%
%

{\bf Outline.} The {remainder} of the  paper is organized as {follows}. Section \ref{sec:algo_outline} introduces the algorithm outline {for} addressing  problem \eqref{initial_problem}. We also provide the main assumptions and the preliminaries required  for the convergence analysis. Section \ref{sec:convergence_analysis} includes the convergence analysis of the proposed scheme. Section \ref{sec:numerical_implementation} contains the numerical implementation where we compare the proposed algorithm with the standard IG methods.

{\bf Notation and preliminary definitions.}  {A function} $f : \mathbb{R}^n \rightarrow \mathbb{R}$ is said to be in the class $C_{\mu,L}^{k,r}$ if $f$ is $\mu$-strongly convex in  $\mathbb{R}^n$, $k$ times continuously differentiable, and its $r^{\text{th}}$ derivative is Lipschitz continuous with constant $L$. A \hdk{nondifferentiable} $\mu$-strongly convex  function $f:\mathbb{R}^n\rightarrow\mathbb{R}$ is in the class $C_{\mu}^{0}$. For any vector $x\in \mathbb{R}^n$,  we use $\|x\|$ to denote the $\ell_2$-norm \hdk{and $x^{(j)}$ is used for denoting the j$^{\text{th}}$ component of $x$}. For problem \eqref{initial_problem}, we define  matrix $A \in \mathbb{R}^{p\times n}$ as $A \triangleq \left(A_1^T, A_2^T, \dots, A_m^T\right) ^T$ and  vector $b \in \mathbb{R}^{p\times 1}$ as $b \triangleq \left(b_1^T, b_2^T, \dots, b_m^T\right) ^T$. 
To produce a diagonal matrix \hdk{in $\mathbb{R}^{n\times n}$} from vector  $x$, we use \hdk{the} notation $diag(x).$ 
For a convex function $f:\mathbb{R}^n\rightarrow\mathbb{R}$ with the domain $\text{dom}(f)$ and  any $x\in\text{dom}(f)$, vector $ \tilde{\nabla}f(x) \in\mathbb{R}^n$ with \hdk{$f(x) +\tilde{\nabla}f(x)^T(y-x)\leq f(y)$} for  all $y\in \text{dom}(f)$, is called a subgradient of $f$ at $x$. We let $\partial f(x)$ denote  the subdifferential set of  \hdk{function} $f$ at $x$.  Euclidean  projection of vector $x$ onto \hdk{a closed convex} set $X$ is denoted by  $\mathcal{P}_X(x)$. We let $[m]$ abbreviate the set $\{1,\ldots, m\}$.

\section{Algorithm Outline}\label{sec:algo_outline}

In this section, we first provide the main assumptions on problem \eqref{initial_problem} and present the outline of the algorithm. Then, we present a few preliminary results to be used in the analysis.  
\begin{assumption}[Properties of problem \eqref{initial_problem}]  \label{assum:initial_prob:ineq_constraint}  Suppose:\\
	\noindent (a) Component function $f_i:\mathbb{R}^n\rightarrow\mathbb{R}$ is merely convex and subdifferentiable with bounded subgradients for all $i \in [m]$.  
	
	\noindent (b) Function $h_i:\mathbb{R}^n\rightarrow\mathbb{R}$ is convex and  subdifferentiable with bounded subgradients for all $i \in [m]$. 
	
	\noindent (c) The set $X$ is compact and convex.
	
	\noindent (d) The feasible set of problem \eqref{initial_problem} is nonempty.
\end{assumption}
An underlying idea in development of Algorithm \ref{alg:IR-IG_avg} is to define a regularized error metric. 
\begin{definition}\label{def:infeasibility_metric_1}	\normalfont
Consider the following term for measuring infeasibility for an agent $i$: 
	\begin{align*}
	\phi_i(x) \triangleq  \tfrac{1}{2}\|A_ix-b_i\|^2 +  h_i^+(x) +  \textstyle\sum_{j\in J}\tfrac{\max \left\{-x^{(j)}, 0\right\}}{m},
	\end{align*} 
where $h_i^+(x) \triangleq \max\{0,h_i(x)\}$ for $i \in [m]$ and all $x \in \mathbb{R}^n$. 	Further, we define $\phi(x) = \textstyle\sum_{i = 1}^m\phi_i(x)$.
\end{definition}

Then, for each agent $i$, we consider a regularized metric defined as $\phi_i(x)+\eta_kf_i(x)$ at iteration $k$. This metric captures both infeasibility and objective component function of the agent. Next, we derive a subgradient to this metric. 

Let $\partial  h_i^+(x)$ denote the subdifferential set of the function $ h_i^+$ at $x$. Consider the vector $\tilde \nabla h_i^+(x)$ defined as $\tilde \nabla h_i^+(x)\triangleq h_i^+(x)\tilde \nabla h_i(x)$ where $\tilde \nabla h_i(x)$ denotes a subgradient of function $h_i$ at $x$. Then, from the definition of subgradient mapping and the definition of $h_i^+(x)$, we have that $\tilde \nabla h_i^+(x) \in \partial  h_i^+(x)$.
Next, consider the function $\tfrac{1}{m}\textstyle\sum_{j \in J}\max\left\{0,-x^{(j)}\right\}$. A subgradient to this function is the vector $\tfrac{\mathds{1}^-(x)}{m}$ where $\mathds{1}^-(x) $ is defined a column vector $\in \mathbb{R}^n$ and the value of any component $i \in \{ 1, \dots, n \}$ is $-1$ when $x^{(i)} < 0$ and $i \in J$, otherwise that component is $0$. Let $x_{k,i} $ \hdk{in}  $ \mathbb{R}^n$ denote the iterate of agent $i$ at iteration $k$. From the above discussion, we can conclude that the subgradient of the regularized error metric for agent $i$, is given as follows:
$${ A_i^T\left(A_ix_{k,i}-b_{i}\right)}   +\tilde{\nabla} h_i^+(x_{k,i}) +\tfrac{ \mathds{1}^-\left(x_{k,i}\right)}{m}+ {\eta_{k}} \tilde{\nabla} f_{i}\left( x_{k,i}\right).$$

We are now ready to present the outline of aIR-IG scheme presented by Algorithm \ref{alg:IR-IG_avg}. At each iteration, agents update their iterates in a cyclic manner by employing the aforementioned subgradient. Each agent uses its local information including subgradients of functions $f_i$, $h_i$, as well as matrix $A_i$ and vector $b_i$. Here $\gamma_{k}$    and $\eta_k$ are the stepsize and regularization parameters, respectively. These parameters are updated at each iteration. This, indeed, is important because the convergence and rate analysis mainly depend on the choice of $\gamma_{k}$ and $\eta_{{k}}$. The key research question lies in finding suitable update rules for the two sequences so that we can achieve convergence and rate results. For the rate analysis, we employ averaging which is characterized by stepsize $\gamma_{k}$ and a scalar $0\leq r<1$.

\begin{algorithm} 
	\caption{Averaged Iteratively Regularized Incremental Gradient (aIR-IG)}\label{alg:IR-IG_avg}
	\begin{algorithmic}[1]
		\State{{\bf Input}: $x_0 \in \mathbb{R}^n$, $\bar{x}_0 := x_0$, $S_0 := \gamma_{0}^r,$ and $0\leq r <1.$} 
		\For{$k = 0,1, \dots, {N-1}$}
		\State Let {$x_{k,1} := x_k$ and} select $\gamma_k>0,$ $\eta_{k}>0$ 
		\For{$i = 1, \dots, {m}$}
		\begin{align*}
		\hspace{0.5cm}x_{k,{i+1}} & :=  \mathcal{P}_X \left({x_{k,i}}-\gamma_k\left({ A_i^T\left(A_ix_{k,i}-b_{i}\right)}  \right.\right.\\ &\left.\left. \ +\tilde{\nabla} h_i^+(x_{k,i}) +\tfrac{ \mathds{1}^-\left(x_{k,i}\right)}{m}+ {\eta_{k}} \tilde{\nabla} f_{i}\left( x_{k,i}\right)\right)\right)
		\end{align*}
		\EndFor
		\State Set $x_{k+1}\triangleq x_{k,{m+1}}$.
		
		\State Update {the weighted} average iterate as 
		\begin{align*}
		\ \ \bar{x}_{k+1}:=\tfrac{S_k\bar{x}_k  + \gamma_{k+1}^r x_{k+1}}{S_{k+1}}, \ \text{where} \ S_{k+1}:=S_k+\gamma_{k+1}^r.    &&
		\end{align*}
		\EndFor
		\State{{\bf return}: $\bar{x}_N$. }
	\end{algorithmic}
\end{algorithm}

In the following, we claim the boundedness of the subgradients $\tilde \nabla \phi_i(x)$ and $\tilde \nabla f_i(x)$ which will be used in the rate analysis in the next section.

\begin{remark}\label{rem: initial_boundednes_subgrad}
	\normalfont
	Under Assumption \ref{assum:initial_prob:ineq_constraint}, from compactness of the set $X$, the term $A_i^T\left(A_ix-b_{i}\right)$ is bounded. Also, from \hdk{the}  boundedness of subgradients of function $h_i$ and continuity of the function $h_i$ that is implied from convexity of $h_i$, we can claim that the subgradient $\tilde \nabla h_i^+(x)\triangleq h_i^+(x)\tilde \nabla h_i(x)$ is bounded on the set $X$. Consequently, we have that $\tilde \nabla \phi_i(x) \triangleq { A_i^T\left(A_ix-b_{i}\right)}   +\tilde{\nabla} h_i^+(x) +\tfrac{ \mathds{1}^-\left(x\right)}{m} $ is a bounded subgradient of $\phi_i$ for all $x \in X$. This implies that there exists a scalar $C>0$ such that for all $x \in X$, we have: $$
	\textstyle\sum_{i=1}^m \tilde{\nabla}\phi_i(x) \leq {C} \ \text{ and } \ \tilde{\nabla}\phi_i(x) \leq \tfrac{C}{m} \hbox{ for all }i \in [m].
$$
	
\end{remark}

\begin{remark}\label{rem:boundedness_subgradient} 
	\normalfont	From Assumption \ref{assum:initial_prob:ineq_constraint}, taking into account the subdifferentiability and boundedness of subgradient of \hdk{function} $f_i$, there exists a scalar $C_f>0$ such that $\text{for all } x\in X, $  
	\begin{align*}
	\textstyle\sum_{i = 1}^m\left\| \tilde{\nabla}f_i\left( x\right)\right\|\leq C_f  \text{ and } \left\|\tilde{\nabla}{ f_{i}\left( x\right)}\right\|\leq \tfrac{C_f}{m}\hbox{ for all }i \in [m].
	\end{align*}
\end{remark}

\begin{remark}\label{rem:lipschitz_parameters}
	\normalfont Taking into account Assumption \ref{assum:initial_prob:ineq_constraint}, from Theorem 3.61 in \cite{Beck2017}, functions $f_i$ and $\phi_i$ are Lipschitz continuous over set $X$. Therefore for  $x,y\in X$, and \hdk{$i\in [m]$}, 
	$|f_i(x)-f_i(y)|\leq \tfrac{C_f}{m}\|x-y\| \ \text{ and } \  |\phi_i(x)-\phi_i(y)|\leq \tfrac{C}{m}\|x-y\|.$
\end{remark}

Next, we show that the sequence $\bar{x}_k$, employed in Algorithm \ref{alg:IR-IG_avg}, is a well-defined weighted average.
\begin{remark}\label{rem:averaging_convexity}
	\normalfont
	From Algorithm \ref{alg:IR-IG_avg}, the average of the iterate can be written as ${\bar{x}_{k+1}} = \textstyle\sum_{t=0}^{k}\lambda_{t,k}x_t,$ where $\lambda_{t,k}\triangleq \tfrac{\gamma_{t}^r}{\textstyle\sum_{j = 0}^k\gamma_j^r} \ \text{ for }\ t \in \{0,\dots,k\}$ denote the weights. This can be shown using induction on $k \geq 0$. For $k=0$, the relation holds directly due to the initialization $\bar{x}_0 := x_0$. To show the relation for $k+1$, assuming that it holds for $k$,  using the step $7$ in  Algorithm \ref{alg:IR-IG_avg},  and that $S_k:= \textstyle\sum_{j = 0}^k\gamma_j^r$, we have:
	\begin{align*}
	{\bar{x}_{k+1}} = \tfrac{S_k\bar{x}_k + \gamma_{k+1}^rx_{k+1}}{S_{k+1}} = \tfrac{\textstyle\sum_{t=0}^{k}\gamma_{t}^r{x}_t + \gamma_{k+1}^rx_{k+1}}{S_{k+1}} \\= \tfrac{\textstyle\sum_{t=0}^{k+1}\gamma_{t}^rx_t}{\textstyle\sum_{t=0}^{k+1}\gamma_{t}^r}  = \textstyle\sum_{t = 0}^{k+1}\lambda_{t,k}x_t.
\end{align*}
\end{remark}
\hdk{In this work, the average of the $m^{\text{th}}$ agent's iterate  is taken. We believe the rate results also hold for the average iterates of the other agents. This remains a future direction to analyze.}

The next result will be employed in the rate analysis.
\begin{lemma}[Lemma 2.14 in \cite{KaushikYousefian2021}]\label{lem:harmonic_series_bound} For any scalar $\alpha \in [0,  1)$ and  integer $N$ such that $N\geq 2^{\tfrac{1}{1-\alpha}}-1$, we have:
	$$\tfrac{{(N+1)}^{1-\alpha}}{2(1-\alpha)} \leq \textstyle\sum_{k = 0}^{N}(k+1)^{-\alpha}\leq \tfrac{{(N+1)}^{1-\alpha}}{1-\alpha}.$$	
\end{lemma}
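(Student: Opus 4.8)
The plan is to reduce the sum to a shifted $p$-series and sandwich it between two integrals of the decreasing map $t\mapsto t^{-\alpha}$. First I would reindex by setting $j=k+1$, so that $\textstyle\sum_{k=0}^{N}(k+1)^{-\alpha}=\textstyle\sum_{j=1}^{N+1}j^{-\alpha}$. The case $\alpha=0$ is immediate: the sum equals $N+1$, the upper bound holds with equality, and the lower bound $\tfrac{N+1}{2}\le N+1$ is trivial. So I would assume $\alpha\in(0,1)$, in which case $g(t)\triangleq t^{-\alpha}$ is continuous and strictly decreasing on $(0,\infty)$, and the (possibly improper) integral $\int_0^{c}t^{-\alpha}\,dt=\tfrac{c^{1-\alpha}}{1-\alpha}$ is finite for every $c>0$ precisely because $1-\alpha>0$.

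For the upper bound, monotonicity of $g$ gives $j^{-\alpha}\le\int_{j-1}^{j}t^{-\alpha}\,dt$ for each $j\ge 1$; summing over $j=1,\dots,N+1$ telescopes the integration intervals into $[0,N+1]$, yielding $\textstyle\sum_{j=1}^{N+1}j^{-\alpha}\le\int_0^{N+1}t^{-\alpha}\,dt=\tfrac{(N+1)^{1-\alpha}}{1-\alpha}$. Note this half of the claim needs no hypothesis on $N$.

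For the lower bound, monotonicity instead gives $j^{-\alpha}\ge\int_{j}^{j+1}t^{-\alpha}\,dt$; summing over $j=1,\dots,N$ and adding the nonnegative term $(N+1)^{-\alpha}$ gives $\textstyle\sum_{j=1}^{N+1}j^{-\alpha}\ge\int_1^{N+1}t^{-\alpha}\,dt=\tfrac{(N+1)^{1-\alpha}-1}{1-\alpha}$. The remaining step is to show $\tfrac{(N+1)^{1-\alpha}-1}{1-\alpha}\ge\tfrac{(N+1)^{1-\alpha}}{2(1-\alpha)}$, which, after clearing the positive factor $\tfrac{1}{1-\alpha}$, is equivalent to $(N+1)^{1-\alpha}\ge 2$, i.e.\ $N\ge 2^{1/(1-\alpha)}-1$ --- exactly the stated hypothesis. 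This is the only place the calculation is at all delicate, and it is really just bookkeeping: the bound on $N$ is calibrated so that the stray $-1$ arising from integrating from $1$ rather than from $0$ can be absorbed into half of the leading term $(N+1)^{1-\alpha}$. I do not expect any substantive obstacle beyond this.
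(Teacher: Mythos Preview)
Your argument is correct. The integral-comparison sandwich you use is the standard route, and each step checks out: the reindexing is fine, the $\alpha=0$ case is trivial, the upper bound follows from $j^{-\alpha}\le\int_{j-1}^{j}t^{-\alpha}\,dt$ (with the improper integral at $0$ finite since $1-\alpha>0$), and the lower bound from $j^{-\alpha}\ge\int_{j}^{j+1}t^{-\alpha}\,dt$ plus the algebraic step $(N+1)^{1-\alpha}\ge 2$, which is exactly the hypothesis on $N$.

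As for comparison with the paper: there is nothing to compare. The paper does not prove this lemma; it simply quotes it as Lemma~2.14 from \cite{KaushikYousefian2021} and uses it as a black box in the proof of Theorem~\ref{thm:rates}. Your write-up is a complete, self-contained proof of the quoted result, which is more than the present paper supplies.
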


\section{Convergence Analysis}\label{sec:convergence_analysis}
We begin with obtaining an error bound that will be employed later in the construction of bounds on the objective value and infeasibility metrics for Algorithm \ref{alg:IR-IG_avg}.
\begin{lemma}\label{lem:pseudo_bound_sequence}
	Let the sequence $\{{x}_k\}$ be generated by Algorithm \ref{alg:IR-IG_avg} and $\{\gamma_{k}\}$ and $\{\eta_{{k}}\}$ be nonincreasing positive sequences. Let Assumption \ref{assum:initial_prob:ineq_constraint} hold, $0\leq r <1$, and scalars $C, C_f>0$ be defined as in Remarks \ref{rem: initial_boundednes_subgrad} and \ref{rem:boundedness_subgradient}, respectively. Then, for any $y\in X$ and $k\geq 0, $ we have:
	\begin{align}\label{eq_36}
			2\gamma_{k}^r\eta_{k}&\left(f(x_{k}) -f(y) \right) \nonumber  +2\gamma_k^r\left(\phi\left(x_k\right)-\phi\left(y\right)\right)   \nonumber\\& \nonumber\leq \gamma_{k}^{r-1} \left\| {x_{k}}-y\right\|^2  -\gamma_{k}^{r-1}\left\| x_{k+1} -  y \right\|^2     \\&\quad +  \left(1+  \tfrac{1}{m}\right)\gamma_{k}^{r+1}\left(C +\eta_{{k}}C_f\right)^2.
	\end{align}
\end{lemma}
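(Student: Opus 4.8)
The plan is to run the textbook projected-subgradient descent argument, adapted to the cyclic (incremental) sweep over the $m$ agents. First I would introduce shorthand for the direction used inside the inner loop: set $g_{k,i} \triangleq \tilde{\nabla}\phi_i(x_{k,i}) + \eta_k \tilde{\nabla} f_i(x_{k,i})$, where $\tilde{\nabla}\phi_i(x_{k,i}) = A_i^T(A_i x_{k,i} - b_i) + \tilde{\nabla} h_i^+(x_{k,i}) + \tfrac{\mathds{1}^-(x_{k,i})}{m}$ is the subgradient of $\phi_i$ identified just before Remark~\ref{rem: initial_boundednes_subgrad}, so that the inner update reads $x_{k,i+1} = \mathcal{P}_X(x_{k,i} - \gamma_k g_{k,i})$. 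Fixing $y \in X$, nonexpansiveness of $\mathcal{P}_X$ together with expanding the square gives, for each $i \in [m]$, $\|x_{k,i+1} - y\|^2 \le \|x_{k,i} - y\|^2 - 2\gamma_k g_{k,i}^T(x_{k,i} - y) + \gamma_k^2 \|g_{k,i}\|^2$. Summing over $i = 1, \dots, m$, the $\|\cdot - y\|^2$ terms telescope, and with $x_{k,1} = x_k$ and $x_{k,m+1} = x_{k+1}$ one gets $\|x_{k+1} - y\|^2 \le \|x_k - y\|^2 - 2\gamma_k \textstyle\sum_{i=1}^m g_{k,i}^T(x_{k,i} - y) + \gamma_k^2 \textstyle\sum_{i=1}^m \|g_{k,i}\|^2$.

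Next I would lower-bound each inner product. Since $\tilde{\nabla}\phi_i(x_{k,i})$ and $\tilde{\nabla} f_i(x_{k,i})$ are subgradients of the convex functions $\phi_i$ and $f_i$ and $\eta_k > 0$, convexity yields $g_{k,i}^T(x_{k,i} - y) \ge (\phi_i(x_{k,i}) - \phi_i(y)) + \eta_k (f_i(x_{k,i}) - f_i(y))$. The point of the lemma, however, is to have $x_k$ rather than $x_{k,i}$ in the first slot, so the crux is to control the inner-cycle drift $\|x_{k,i} - x_k\|$. Because all iterates lie in $X$ and $\mathcal{P}_X$ is nonexpansive, $\|x_{k,l+1} - x_{k,l}\| \le \gamma_k \|g_{k,l}\|$, and Remarks~\ref{rem: initial_boundednes_subgrad}--\ref{rem:boundedness_subgradient} give $\|g_{k,l}\| \le \tfrac{C + \eta_k C_f}{m}$; chaining these, $\|x_{k,i} - x_k\| \le \textstyle\sum_{l=1}^{i-1}\|x_{k,l+1}-x_{k,l}\| \le \gamma_k (i-1)\tfrac{C + \eta_k C_f}{m}$. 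Plugging this into the Lipschitz estimates of Remark~\ref{rem:lipschitz_parameters}, $\phi_i(x_{k,i}) \ge \phi_i(x_k) - \tfrac{C}{m}\|x_{k,i} - x_k\|$ and $f_i(x_{k,i}) \ge f_i(x_k) - \tfrac{C_f}{m}\|x_{k,i} - x_k\|$, so that $g_{k,i}^T(x_{k,i} - y) \ge (\phi_i(x_k) - \phi_i(y)) + \eta_k(f_i(x_k) - f_i(y)) - \gamma_k (i-1)\tfrac{(C + \eta_k C_f)^2}{m^2}$.

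Finally I would sum over $i$ and assemble. Using $\phi = \textstyle\sum_i \phi_i$, $f = \textstyle\sum_i f_i$, $\textstyle\sum_{i=1}^m (i-1) = \tfrac{m(m-1)}{2} \le \tfrac{m^2}{2}$ (so the drift error, carrying the prefactor $2\gamma_k$, contributes at most $\gamma_k^2 (C + \eta_k C_f)^2$), and $\textstyle\sum_{i=1}^m \|g_{k,i}\|^2 \le m\big(\tfrac{C + \eta_k C_f}{m}\big)^2 = \tfrac{(C + \eta_k C_f)^2}{m}$, the telescoped inequality becomes $\|x_{k+1} - y\|^2 \le \|x_k - y\|^2 - 2\gamma_k \eta_k (f(x_k) - f(y)) - 2\gamma_k (\phi(x_k) - \phi(y)) + (1 + \tfrac1m)\gamma_k^2 (C + \eta_k C_f)^2$. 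Rearranging to isolate the objective and infeasibility terms on the left and multiplying through by $\gamma_k^{r-1} > 0$ (legitimate since $\gamma_k > 0$ and $0 \le r < 1$) reproduces \eqref{eq_36} verbatim. The main obstacle, and the only genuinely incremental ingredient, is the second paragraph: bounding the cyclic drift $\|x_{k,i} - x_k\|$ and, in particular, keeping the drift error and the $\textstyle\sum \|g_{k,i}\|^2$ term bookkept separately and using the per-agent bounds $\tfrac{C}{m}$, $\tfrac{C_f}{m}$ (not the aggregate $C$, $C_f$), so that the error constant collapses exactly to $1 + \tfrac1m$.
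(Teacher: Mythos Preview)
Your proposal is correct and follows essentially the same route as the paper: the one-step projected-subgradient inequality for each inner update, convexity to bound the inner product, the Lipschitz estimates of Remark~\ref{rem:lipschitz_parameters} to pass from $x_{k,i}$ to $x_k$, a telescoping drift bound $\|x_{k,i}-x_k\|\le \gamma_k(i-1)\tfrac{C+\eta_kC_f}{m}$, summation over $i$, and a final multiplication by $\gamma_k^{r-1}$. The only cosmetic difference is that you use the sharp factor $(i-1)$ in the drift bound whereas the paper uses $(i)$, which makes your arithmetic in collapsing the constant to $1+\tfrac{1}{m}$ slightly cleaner.
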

\begin{proof}
	Consider the update rule in step $4$ in Algorithm \ref{alg:IR-IG_avg}. For  iteration $k\geq0$, agent $i \in  \{1,\dots,m\}$, and $y\in X$, we have: 
	\begin{align*}
		&\left\| x_{k,{i+1}} -  y \right\|^2 :=\left\| \mathcal{P}_X\left({x_{k,i}}-\gamma_k\left({ A_i^T\left(A_ix_{k,i}-b_{i}\right)}  \right.\right.\right. \\ & \left.\left.\left.  + \tilde{\nabla}h_i^+(x_{k,i})  +\tfrac{\mathds{1}^-\left(x_{k,i}\right)}{m}   + {\eta_{k}} \tilde{\nabla} f_{i}\left( x_{k,i}\right)\right)\right)- \mathcal{P}_X (y) \right\|^2.
	\end{align*}
	Employing the non-expansiveness  of the projection operator, and recalling Definition \ref{def:infeasibility_metric_1} for $\phi_i(x)$,  we have:
	\begin{align*}
		&\left\| x_{k,{i+1}} -  y \right\|^2  \\&\leq \left\| {x_{k,i}}-\gamma_k\left(\tilde{\nabla}\phi_i\left(x_{k,i}\right)+ {\eta_{k}} \tilde{\nabla} f_{i}\left( x_{k,i}\right)\right)-  y \right\|^2\\&
		=  \left\| {x_{k,i}}-y\right\|^2  + \underbrace{\gamma_k^2\left\|\tilde{\nabla}\phi_i\left(x_{k,i}\right)+ {\eta_{k}} \tilde{\nabla} f_{i}\left( x_{k,i}\right) \right\|^2}_{\text{term 1}}   \\  & \quad-  2\gamma_{k}\left(\tilde{\nabla}\phi_i\left(x_{k,i}\right)+ {\eta_{k}} \tilde{\nabla} f_{i}\left( x_{k,i}\right)\right)^T\left(x_{k,{i}} -  y \right).
	\end{align*}
	Consider term 1. Employing the triangle inequality, taking into account the definitions of scalars $C$, and $C_f$, we obtain:
	\begin{align*}
	&	\left\| x_{k,{i+1}} -  y \right\|^2  \leq \left\| {x_{k,i}}-y\right\|^2 + \gamma_{k}^2\left(\tfrac{C+\eta_{k}C_f}{m}\right)^2   \\& \qquad \underbrace{- 2\gamma_{k}\left(\tilde{\nabla}\phi_i\left(x_{k,i}\right)+ {\eta_{k}} \tilde{\nabla} f_{i}\left( x_{k,i}\right)\right)^T\left(x_{k,{i}} -  y \right)}_{\text{term 2}}.
	\end{align*}
	Bounding term 2 by invoking the definition of subgradient and  the convexity of $\phi_i(x)$ and $f_i(x)$, we obtain: 
	\begin{align*}
		&\left\| x_{k,{i+1}} -  y \right\|^2  \leq   \left\| {x_{k,i}}-y\right\|^2 + \gamma_{k}^2\left(\tfrac{C+\eta_{k}C_f}{m}\right)^2  \\&\qquad + 2\gamma_{k}\eta_{k}\left(f_i(y) - f_i(x_{k,{i}})\right)  +2\gamma_k\left(\phi_i(y) -\phi_i\left(x_{k,i}\right)\right).
	\end{align*}
	Taking summation  over all the agents  $i\in \{ 1,\dots, m \}$, 
	\begin{align*}
		&\left\| x_{k+1} -  y \right\|^2  \leq    \left\| {x_{k}}-y\right\|^2 + 2\gamma_{k}\eta_{k}\textstyle\sum_{i=1}^m\left(f_i(y) - f_i(x_{k,{i}})\right)\\& + \gamma_{k}^2\textstyle\sum_{i=1}^m\left(\tfrac{C+\eta_{k}C_f}{m}\right)^2  +2\gamma_k\textstyle\sum_{i=1}^m\left(\phi_i(y)-\phi_i\left(x_{k,{i}}\right)\right).
	\end{align*}
	Adding and subtracting $2\gamma_{k}\textstyle\sum_{i = 1}^m\phi_i\left(x_k\right)$$+2\gamma_{k}\eta_{k}\textstyle\sum_{i = 1}^mf_i\left(x_k\right)$, and taking into account Definition \ref{def:infeasibility_metric_1}, we have:
	\begin{align*}
		& \left\| x_{k+1} -  y \right\|^2 \leq \left\| {x_{k}}-y\right\|^2 + \tfrac{\gamma_{k}^2\left(C+\eta_{k}C_f\right)^2}{m} \\& + 2\gamma_{k}\eta_{k}\left(f(y) - f(x_{k})\right) +2\gamma_k\left(\phi(y)- \phi\left(x_{k}\right)\right)\\&  +2 \gamma_{k} \textstyle\sum_{i=1}^m\left( \phi_i\left(x_k\right) - \phi_i\left(x_{k,i}\right) + \eta_{k}\left(f_i\left(x_k\right) -  f_i\left(x_{k,i}\right)\right)\right),
			\\& \leq\left\| {x_{k}}-y\right\|^2 + \tfrac{\gamma_{k}^2\left(C+\eta_{k}C_f\right)^2}{m}  \\&  + 2\gamma_{k}\eta_{k}\left(f(y) - f(x_{k})\right) +2\gamma_k\left(\phi(y)  - \phi\left(x_{k}\right)\right) \\&\hspace{-0.1cm}+ 2 \gamma_{k}\textstyle\sum_{i=1}^m \left(\underbrace{\left|\phi_i\left(x_k\right) - \phi_i\left(x_{k,i}\right)\right| }_{\text{term 3}} + \eta_{k}\underbrace{\left|f_i\left(x_k\right) -  f_i\left(x_{k,i}\right)\right|}_{\text{term 4}} \right).
	\end{align*}
	From Remark \ref{rem:lipschitz_parameters}, bounding terms 3 and 4, we have: 
	\begin{align}\label{eqn:error_bound_1}
			&\left\| x_{k+1} -y \right\|^2\leq\left\| {x_{k}}-y\right\|^2 + \tfrac{\gamma_{k}^2\left(C+\eta_{k}C_f\right)^2}{m}  \nonumber \\&\quad +2\gamma_{k}\eta_{k}\left(f(y) - f(x_{k})\right) +2\gamma_k\left(\phi(y)- \phi\left(x_{k}\right)\right) \nonumber \\ & \quad + \tfrac{2 \gamma_{k}\left(C +\eta_{k}C_f\right)}{m}\textstyle\sum_{i=2}^m \underbrace{\left\| x_k - x_{k,i} \right\|}_{\text{term 5}}.  
		\end{align}
		Note that from Algorithm \ref{alg:IR-IG_avg},  for $i=1$, we have $\|x_k - x_{k,1}\| = 0$. Consider term 5 in relation \eqref{eqn:error_bound_1}. Applying induction on $i$, we  bound  term 5 as $\| x_k - x_{k,i} \|\leq (i)\gamma_{k}\left(C+\eta_{k}C_f\right)/m$ for any $i= 2,\dots, m.$  For  $i=2$,   from Algorithm \ref{alg:IR-IG_avg}, we have:
		\begin{align*}
		&\| x_k - x_{k,2} \| = \left\| \mathcal{P}_X \left(x_{k,1}\right) \right. \\& \hspace{1cm}\left. -  \mathcal{P}_X \left( x_{k,1} - \gamma_{k}\left(\tilde{\nabla}\phi_1(x_{k,1}) + \eta_{k}\tilde{\nabla} f_1\left(x_{k,1}\right)\right)  \right)  \right\|\\  & \leq  \gamma_{k}\left\|\tilde{\nabla}\phi_1(x_{k,1}) + \eta_{k}\tilde{\nabla} f_1\left(x_{k,1}\right) \right\| \leq \gamma_{k}\left(C + \eta_k C_f\right)/m.
		\end{align*}
		Now, suppose the hypothesis statement holds for some $i \geq 2$. Then, we can write:
		\begin{align*}
			&\| x_k - x_{k,i+1} \| = \\& \left\| \mathcal{P}_X \left(x_{k}\right) - \mathcal{P}_X \left( x_{k,i} - \gamma_{k}\left(\tilde{\nabla}\phi_i(x_{k,i}) + \eta_{k}\tilde{\nabla} f_i\left(x_{k,i}\right)\right)  \right)  \right\|\\  & \leq  \left\| x_{k} - x_{k,i} \right\| +  \gamma_{k}\left\| \tilde{\nabla}\phi_i(x_{k,i}) + \eta_{k}\tilde{\nabla} f_i\left(x_{k,i}\right)  \right\|\\ & \leq  \left\| x_{k} - x_{k,i} \right\| +  \tfrac{\gamma_{k}\left(C + \eta_k C_f\right)}{m} \leq  \tfrac{(i+1)\gamma_{k}\left(C + \eta_k C_f\right)}{m}. 		
	\end{align*}
	Therefore, the hypothesis statement holds for any $i\geq 2$. Substituting the bound for term 5 in equation \eqref{eqn:error_bound_1}, we have:
		\begin{align*}
			&\left\| x_{k+1} -y \right\|^2\leq\left\| {x_{k}}-y\right\|^2 + \tfrac{\gamma_{k}^2\left(C+\eta_{k}C_f\right)^2}{m} \nonumber \\ &  \qquad\qquad\quad + 2\gamma_{k}\eta_{k}\left(f(y) - f(x_{k})\right)  +2\gamma_k\left(\phi(y)- \phi\left(x_{k}\right)\right) \nonumber \\ & \qquad\qquad\quad + \tfrac{2 \gamma_{k}\left(C +\eta_{k}C_f\right)}{m}\textstyle\sum_{i=2}^m \tfrac{(i)\gamma_{k}\left(C + \eta_k C_f\right)}{m}  \\
			&\qquad\quad\quad = \left\| {x_{k}}-y\right\|^2 + \left(1 + \tfrac{1}{m}\right)\gamma_{k}^2\left(C+\eta_{{k}}C_f\right)^2\\&\qquad\qquad\quad +  2\gamma_{k}\eta_{k}\left(f(y) - f(x_{k})\right) +2\gamma_k\left(\phi(y)- \phi\left(x_{k}\right) \right).
		\end{align*}
		Multiplying both sides by the positive term $\gamma_{k}^{r-1}$,
	we obtain the desired result.
\end{proof}
Next we construct the error bounds for Algorithm \ref{alg:IR-IG_avg} in terms of the sequences $\{\gamma_{k}\}$ and $\{\eta_{{k}}\}$.
\begin{proposition}[Error bounds for Algorithm \ref{alg:IR-IG_avg}]\label{prop:error_bound_aIR-IG}
	Consider problem \eqref{initial_problem}. 
	Let $\bar{x}_N$  be generated by Algorithm \ref{alg:IR-IG_avg} after $N$ iterations and  $\{\gamma_{k}\}$ and $\{\eta_{k}\}$ be nonincreasing and strictly positive sequences. Further, let Assumption \ref{assum:initial_prob:ineq_constraint} hold, scalars $C_f, C >0$, and parameter $0 \leq r<1$. Let scalars $M, M_f>0$ be defined such that we have: $\|x\|\leq M \ \text{ and } \ |f(x)|\leq M_f \quad \text{for all } \ x \in X. $  Then for any optimal solution $x^*$ to \eqref{initial_problem}, we have the following: 
	\begin{align*}
			(a)\ f(\bar{x}_{N}) -f(x^*) \leq &\left( \textstyle\sum_{k = 0}^{N} \gamma_{k}^r\right)^{-1}\left(\tfrac{2M^2\gamma_{N}^{r-1}}{\eta_{N}}  \right.\\&  \left. +\left(1 +  \tfrac{1}{m}\right)\tfrac{\left(C+\eta_{{0}}C_f\right)^2}{2}\textstyle\sum_{k = 0}^{N}  \tfrac{\gamma_{k}^{r+1}}{\eta_{{k}}}\right).
	\end{align*}
	\begin{align*}
		(b)\ \phi\left(\bar{x}_N\right)
		\leq& \left(\textstyle\sum_{k = 0}^{N}\gamma_k^r\right)^{-1}\left(2M^2  \gamma_{N}^{r-1} + 2M_f\textstyle\sum_{k = 0}^{N} \gamma_{k}^r\eta_{k}  \right. \\ &\quad\qquad\left.+
		 \left(1 +\tfrac{1}{m}\right) \tfrac{\left(C+\eta_{{0}}C_f\right)^2}{2}\textstyle\sum_{k = 0}^{N}\gamma_{k}^{r+1} \right).
	\end{align*} 
\end{proposition}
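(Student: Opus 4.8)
\emph{Proof proposal.} The plan is to specialize Lemma~\ref{lem:pseudo_bound_sequence} at $y=x^*$, sum the resulting per-iteration inequality over $k=0,\dots,N$, and then pass to the weighted average iterate via Jensen's inequality, using the convex-combination representation of $\bar x_N$ from Remark~\ref{rem:averaging_convexity}. Since $x^*$ is feasible for \eqref{initial_problem}, Definition~\ref{def:infeasibility_metric_1} gives $\phi(x^*)=0$, while $\phi(x_k)\ge 0$ for every $k$; moreover every iterate lies in the convex compact set $X$, so $\|x_k-x^*\|\le 2M$ and $|f(x_k)|\le M_f$. For part~(a) I would drop the nonnegative term $2\gamma_k^r\phi(x_k)$ from the left-hand side of \eqref{eq_36} and divide by $2\eta_k>0$, obtaining $\gamma_k^r\big(f(x_k)-f(x^*)\big)\le \tfrac{\gamma_k^{r-1}}{2\eta_k}\|x_k-x^*\|^2-\tfrac{\gamma_k^{r-1}}{2\eta_k}\|x_{k+1}-x^*\|^2+\tfrac{1}{2}\big(1+\tfrac{1}{m}\big)\tfrac{\gamma_k^{r+1}}{\eta_k}(C+\eta_kC_f)^2$. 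For part~(b) I would instead keep $2\gamma_k^r\phi(x_k)$ on the left and transpose the objective term, obtaining $2\gamma_k^r\phi(x_k)\le \gamma_k^{r-1}\|x_k-x^*\|^2-\gamma_k^{r-1}\|x_{k+1}-x^*\|^2+\big(1+\tfrac{1}{m}\big)\gamma_k^{r+1}(C+\eta_kC_f)^2+2\gamma_k^r\eta_k\big(f(x^*)-f(x_k)\big)$.

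The crux is summing the (non-telescoping) differences $a_k\|x_k-x^*\|^2-a_k\|x_{k+1}-x^*\|^2$ over $k=0,\dots,N$, where $a_k=\tfrac{\gamma_k^{r-1}}{2\eta_k}$ for part~(a) and $a_k=\gamma_k^{r-1}$ for part~(b); the coefficient varies with $k$, so I would use summation by parts, $\sum_{k=0}^N a_k\big(d_k-d_{k+1}\big)=a_0d_0+\sum_{k=1}^N (a_k-a_{k-1})d_k-a_Nd_{N+1}$ with $d_k\triangleq\|x_k-x^*\|^2$. Because $\{\gamma_k\}$ and $\{\eta_k\}$ are nonincreasing and $r-1<0$, both $\{\gamma_k^{r-1}\}$ and $\{1/\eta_k\}$ are nondecreasing, hence $\{a_k\}$ is nondecreasing; discarding the nonpositive term $-a_Nd_{N+1}$ and using $d_k\le 4M^2$ gives $\sum_{k=0}^N a_k(d_k-d_{k+1})\le a_0d_0+4M^2(a_N-a_0)\le 4M^2a_N$. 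Substituting $a_N$ produces the leading terms $\tfrac{2M^2\gamma_N^{r-1}}{\eta_N}$ in~(a) and $2M^2\gamma_N^{r-1}$ in~(b).

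For the remaining right-hand-side contributions I would use $\eta_k\le\eta_0$ to replace each factor $(C+\eta_kC_f)^2$ by $(C+\eta_0C_f)^2$ and pull it outside the sum, and, for part~(b), bound $f(x^*)-f(x_k)\le|f(x^*)-f(x_k)|\le 2M_f$, so that $\sum_{k=0}^N 2\gamma_k^r\eta_k\big(f(x^*)-f(x_k)\big)\le 4M_f\sum_{k=0}^N\gamma_k^r\eta_k$. Adding the per-iteration inequalities over $k=0,\dots,N$ then yields $\sum_{k=0}^N\gamma_k^r\big(f(x_k)-f(x^*)\big)\le \tfrac{2M^2\gamma_N^{r-1}}{\eta_N}+\tfrac{(1+1/m)(C+\eta_0C_f)^2}{2}\sum_{k=0}^N\tfrac{\gamma_k^{r+1}}{\eta_k}$ for~(a), and (after dividing the $\phi$-inequality by $2$) $\sum_{k=0}^N\gamma_k^r\phi(x_k)\le 2M^2\gamma_N^{r-1}+2M_f\sum_{k=0}^N\gamma_k^r\eta_k+\tfrac{(1+1/m)(C+\eta_0C_f)^2}{2}\sum_{k=0}^N\gamma_k^{r+1}$ for~(b). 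Finally, Remark~\ref{rem:averaging_convexity} gives $\bar x_N=\sum_{k=0}^N\lambda_{k,N}x_k$ with $\lambda_{k,N}=\gamma_k^r/\sum_{j=0}^N\gamma_j^r$, so convexity of $f$ and of $\phi$ yields $f(\bar x_N)-f(x^*)\le\sum_{k=0}^N\lambda_{k,N}\big(f(x_k)-f(x^*)\big)$ and $\phi(\bar x_N)\le\sum_{k=0}^N\lambda_{k,N}\phi(x_k)$; dividing the two displayed sums by $\sum_{k=0}^N\gamma_k^r$ delivers~(a) and~(b).

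I expect the main obstacle to be the summation-by-parts step of the second paragraph: one has to track the non-telescoping boundary contributions carefully and to verify the monotonicity of $\{a_k\}$ (which relies on combining monotonicity of $\{\gamma_k\}$ and $\{\eta_k\}$ with the sign of $r-1$) so that the cross terms $(a_k-a_{k-1})d_k$ collapse to $4M^2(a_N-a_0)$. Everything else is a routine substitution of the constants from Remarks~\ref{rem: initial_boundednes_subgrad}--\ref{rem:lipschitz_parameters} together with an application of Jensen's inequality.
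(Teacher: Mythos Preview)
Your proposal is correct and follows essentially the same route as the paper: specialize Lemma~\ref{lem:pseudo_bound_sequence} at $y=x^*$, use $\phi(x^*)=0$ and $\phi(x_k)\ge 0$, bound $(C+\eta_kC_f)^2\le(C+\eta_0C_f)^2$ and $|f(x^*)-f(x_k)|\le 2M_f$, sum over $k$, and apply Jensen via Remark~\ref{rem:averaging_convexity}. The only cosmetic difference is that the paper handles the non-telescoping sum by adding and subtracting $\tfrac{\gamma_{k-1}^{r-1}}{2\eta_{k-1}}\|x_k-x^*\|^2$ (respectively $\gamma_{k-1}^{r-1}\|x_k-x^*\|^2$) and treating the $k=0$ term separately, whereas you package the same computation as an Abel summation-by-parts identity; the monotonicity check on $a_k$ and the use of $d_k\le 4M^2$ are identical in both arguments.
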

\begin{proof}
	Consider relation \eqref{eq_36} from Lemma \ref{lem:pseudo_bound_sequence},  for any $y\in X$.
	Substituting $y$ by $x^*$ and taking into account the feasibility of the vector $x^*$ to problem \eqref{initial_problem}, we obtain:
	\begin{align*}
			 2\gamma_{k}^r\eta_{k}\left(f(x_{k}) -f(x^*) \right) \nonumber & +2\gamma_k^r\phi\left(x_k\right)  \\\hdk{\leq}& \gamma_{k}^{r-1} \left(\left\| {x_{k}}-x^*\right\|^2  -\left\| x_{k+1} -  x^* \right\|^2    \right) \\&\nonumber  +  \left(1+  \tfrac{1}{m}\right)\gamma_{k}^{r+1}\left(C +\eta_{{k}}C_f\right)^2.
		\end{align*}
		Taking into account the nonnegativity of $ 2\gamma_k^r \phi\left(x_k\right) $  and dividing both sides by $2\eta_k$, we have: 
		\begin{align}\label{eq_38}
			 \gamma_{k}^r\left(f(x_{k}) -f(x^*) \right)&\leq   \tfrac{\gamma_{k}^{r-1}}{2\eta_{{k}}}\left( \left\| {x_{k}}-x^*\right\|^2  - \left\| x_{k+1} -  x^* \right\|^2\right)\nonumber \\&  +\left(1 +  \tfrac{1}{m}\right) \tfrac{\gamma_{k}^{r+1}\left(C+\eta_{{k}}C_f\right)^2}{2\eta_k}.
		\end{align}
		Adding and subtracting $\tfrac{\gamma_{k-1}^{r-1}}{2\eta_{k-1}}\|x_k-x^*\|^2$ in the above, 
		\begin{align}\label{eqn:prop_1_1}
			&\gamma_{k}^r\left(f(x_{k}) -f(x^*) \right)\leq   \tfrac{\gamma_{k-1}^{r-1}}{2\eta_{k-1}}\|x_k-x^*\|^2\nonumber \\& \quad-\tfrac{\gamma_{k}^{r-1}}{2\eta_{{k}}}\left\| x_{k+1} -  x^* \right\|^2   + \underbrace{\left(\tfrac{\gamma_{k}^{r-1}}{2\eta_{k}} - \tfrac{\gamma_{k-1}^{r-1}}{2\eta_{k-1}}\right)}_{\text{term 1}}\|x_k-x^*\|^2 \nonumber\\& \quad +\underbrace{\left(1 +  \tfrac{1}{m}\right) \tfrac{\gamma_{k}^{r+1}\left(C+\eta_{{k}}C_f\right)^2}{2\eta_k}}_{\text{term 2}}.
		\end{align}
		Recalling the definition for scalar $M$, we have: 
		\begin{align}\label{eq:bound_X}
			\|x_k-x^*\|^2 \leq 2\|x_k\|^2 + 2\|x^*\|^2  \leq 4M^2. 
		\end{align}
		Taking into account  $r<1$ and the nonincreasing property of the sequences $\{\gamma_{k}\}$ and $\{\eta_{k}\}$, we have: term 1 $\geq 0$. Bounding  term 2 in equation \eqref{eqn:prop_1_1}, we have:
		\begin{align*}
&\gamma_{k}^r\left(f(x_{k}) -f(x^*) \right)\leq   \tfrac{\gamma_{k-1}^{r-1}}{2\eta_{k-1}}\|x_k-x^*\|^2\nonumber \\& \quad  -\tfrac{\gamma_{k}^{r-1}}{2\eta_{{k}}}\left\| x_{k+1} -  x^* \right\|^2  + \left(\tfrac{\gamma_{k}^{r-1}}{2\eta_{k}} - \tfrac{\gamma_{k-1}^{r-1}}{2\eta_{k-1}}\right)4M^2\nonumber \\& \quad  +\left(1 +  \tfrac{1}{m}\right) \tfrac{\left(C+\eta_0C_f\right)^2\gamma_{k}^{r+1}}{2\eta_k}.
\end{align*}
		Next, taking \hdk{summations} over $k = 1,\dots, N$,  we obtain:
		\begin{align}\label{eq_39}
			&\textstyle\sum_{k = 1}^{N} \gamma_{k}^r\left(f(x_{k}) -f(x^*) \right)\leq  \tfrac{\gamma_{0}^{r-1}}{2\eta_{0}}\|x_1-x^*\|^2\nonumber\\&-\tfrac{\gamma_{N}^{r-1}}{2\eta_{N}}\|x_{N+1}-x^*\|^2+  \left(\tfrac{\gamma_{N}^{r-1}}{2\eta_{N}} - \tfrac{\gamma_{0}^{r-1}}{2\eta_{0}}\right)4M^2\nonumber\\ &+\left(1 +  \tfrac{1}{m}\right)\tfrac{\left(C+\eta_{{0}}C_f\right)^2}{2}\textstyle\sum_{k = 1}^{N}  \tfrac{\gamma_{k}^{r+1}}{\eta_{{k}}}.
		\end{align}
		Rewriting  equation \eqref{eq_38} for $k = 0$,  we have:
		\begin{align*}
			\gamma_{0}^r\left(f(x_{0}) -f(x^*) \right)\leq  & \tfrac{\gamma_{0}^{r-1} }{2\eta_{{0}}}\left(\left\| {x_{0}}-x^*\right\|^2-  \left\| x_{1} -  x^* \right\|^2 \right)  \\& + \left(1 +  \tfrac{1}{m}\right)\tfrac{\gamma_{0}^{r+1}\left(C^2+\eta_{{0}} C_f\right)^2}{2\eta_{{0}}}.
		\end{align*}
		Adding the preceding relation with   \eqref{eq_39}, we obtain:
		\begin{align*}
			&\textstyle\sum_{k = 0}^{N} \gamma_{k}^r\left(f(x_{k}) -f(x^*) \right)\leq 2M^2\left(\tfrac{\gamma_{N}^{r-1}}{\eta_{N}}-\tfrac{\gamma_{0}^{r-1}}{\eta_{{0}}}\right) \\& -\tfrac{\gamma_{N}^{r-1}}{2\eta_{N}} \left\| x_{N+1} -  x^* \right\|^2 + \tfrac{\gamma_{0}^{r-1} \left\| {x_{0}}-x^*\right\|^2}{2\eta_{{0}}}  \\&+\left(1 +  \tfrac{1}{m}\right)\tfrac{\left(C+\eta_{{0}}C_f\right)^2}{2}\textstyle\sum_{k = 0}^{N}  \tfrac{\gamma_{k}^{r+1}}{\eta_{{k}}}.
		\end{align*}
		Further from \eqref{eq:bound_X}, and neglecting the nonpositive term, 
		\begin{align*}
			&  \textstyle\sum_{k = 0}^{N} \gamma_{k}^r\left(f(x_{k}) -f(x^*) \right) \leq {2M^2\gamma_{N}^{r-1}}/{\eta_{N}}
			\\&+\left(1 +  \tfrac{1}{m}\right)\tfrac{\left(C+\eta_{{0}}C_f\right)^2}{2}\textstyle\sum_{k = 0}^{N}  \tfrac{\gamma_{k}^{r+1}}{\eta_{{k}}}.
		\end{align*}
		Next, dividing both sides by  $\textstyle\sum_{k = 0}^{N} \gamma_{k}^r$, 
		\hdk{\begin{align*}
			& \left(\textstyle\sum_{k = 0}^{N} \gamma_{k}^r\right)^{-1} \textstyle\sum_{k = 0}^{N} \gamma_{k}^r\left(f(x_{k}) -f(x^*) \right) \leq \left(\textstyle\sum_{k = 0}^{N} \gamma_{k}^r\right)^{-1}\\&\left( {2M^2\gamma_{N}^{r-1}}/{\eta_{N}}
			+\left(1 +  \tfrac{1}{m}\right)\tfrac{\left(C+\eta_{{0}}C_f\right)^2}{2}\textstyle\sum_{k = 0}^{N}  \tfrac{\gamma_{k}^{r+1}}{\eta_{{k}}}\right).
		\end{align*}
		Taking into account} the  convexity of  $f$ and  recalling Remark \ref{rem:averaging_convexity}, we obtain the result. \\
	(b) Consider equation \eqref{eq_36}. Writing it  for $y:=x^*\in X$,
	\begin{align*}
	&2\gamma_k^r\phi\left(x_k\right)   \nonumber\leq 2\gamma_{k}^r\eta_{k}\left(f\left(x^*\right) - f(x_{k})  \right) + \gamma_{k}^{r-1} \left(\left\| {x_{k}}-x^*\right\|^2\right.\\&\left.-\left\| x_{k+1} -  x^* \right\|^2\right)+  \left(1+  \tfrac{1}{m}\right)\gamma_{k}^{r+1}\left(C +\eta_{{k}}C_f\right)^2.
	\end{align*}
	Recalling the definition of  $M_f$, we have, $\ |f(x^*) - f(x_k)|\leq 2M_f$.   Bounding the preceding inequality, 
	\begin{align}\label{eq39}
	&	2\gamma_k^r\phi(x_k)\nonumber \leq  \gamma_{k}^{r-1} \left(\left\| {x_{k}}-x^*\right\|^2  -\left\| x_{k+1} - x^* \right\|^2\right)  \\& + 4\gamma_{k}^r\eta_{k}M_f+ \left(1+  \tfrac{1}{m}\right)\gamma_{k}^{r+1}\left(C +\eta_{{k}}C_f\right)^2.
	\end{align}
	Adding and subtracting $\gamma_{k-1}^{r-1}\|x_k-x^*\|^2$ in the above,
	\begin{align*}
		2\gamma_k^r\phi(x_k) \leq&   \gamma_{k-1}^{r-1} \left\| {x_{k}}-x^*\right\|^2-\gamma_{k}^{r-1}\left\| x_{k+1} -  x^* \right\|^2 \\& + 4\gamma_{k}^r\eta_{k}M_f     + \underbrace{\left(\gamma_{k}^{r-1} -\gamma_{k-1}^{r-1} \right) \left\| {x_{k}}-x^*\right\|^2}_{\text{term 3}} \\&+  \nonumber \underbrace{\left(1+  \tfrac{1}{m}\right)\gamma_{k}^{r+1}\left(C +\eta_{{k}}C_f\right)^2}_{\text{term 4}}.
	\end{align*}
	Using the nonincreasing property of $\{\gamma_{k}\}$ and $\{\eta_{k}\}$, recalling $0\leq r<1$, we have $ \gamma_{k}^{r-1} -\gamma_{k-1}^{r-1} > 0 $, and $\left(1 +  \tfrac{1}{m}\right) \gamma_{k}^{r+1} > 0$. Further, from the boundedness of set $X$, we have: term 3 $< \left( \gamma_{k}^{r-1} -\gamma_{k-1}^{r-1} \right) 4M^2 $, and term 4 $< \left(1 +  \tfrac{1}{m}\right) \gamma_{k}^{r+1}\left(C + \eta_{0}C_f\right)^2$. Next,  taking \hdk{summations} over $k = 1,\dots, N, $ and dropping the nonpositive terms, we obtain:
	\begin{align}\label{eq40}
		& 2\textstyle\sum_{k = 1}^{N}\gamma_k^r\phi(x_k)\leq  \gamma_{0}^{r-1} \left\| {x_{1}}-x^*\right\|^2  + 4M^2 \left(\gamma_{N}^{r-1} -\gamma_{0}^{r-1} \right)      \nonumber \\   &+ \left(1 +\tfrac{1}{m}\right) \left(C+\eta_{{0}}C_f\right)^2\textstyle\sum_{k = 1}^{N}\gamma_{k}^{r+1}+4M_f\textstyle\sum_{k = 1}^{N}\gamma_{k}^r\eta_{k}.
	\end{align}
	Writing equation \eqref{eq39} for $k=0$,  we have:
	\begin{align*}
		2\gamma_0^r\phi(x_0)  \leq  & \gamma_{0}^{r-1} \left(\left\| {x_{0}}-x^*\right\|^2   -\left\| x_{1} -  x^* \right\|^2\right) + 4\gamma_{0}^r\eta_{0}M_f \nonumber\\&  + \left(1 +\tfrac{1}{m}\right) \gamma_{0}^{r+1}\left(C+\eta_{{0}}C_f\right)^2\nonumber.
	\end{align*}
	Adding this into equation \eqref{eq40}, we have:
	\begin{align*}
		&	2\textstyle\sum_{k = 0}^{N}\gamma_k^r\phi(x_k)\leq \gamma_{0}^{r-1} \left\| {x_{0}}-x^*\right\|^2+ 4M^2 \left( \gamma_{N}^{r-1}- \gamma_{0}^{r-1} \right)   \\&+ \left(1 +\tfrac{1}{m}\right) \left(C+\eta_{{0}}C_f\right)^2\textstyle\sum_{k = 0}^{N}\gamma_{k}^{r+1}+4M_f\textstyle\sum_{k = 0}^{N}\gamma_{k}^r\eta_{k}.
	\end{align*}
	Bounding $\|x_0-x^*\|^2$ from equation \eqref{eq:bound_X}, dividing both sides by  $\textstyle\sum_{k = 0}^{N} \gamma_{k}^r$, \hdk{taking into account} the  convexity of   $\phi(x_k)$, and from Remark \ref{rem:averaging_convexity}, we obtain  the required result.
\end{proof}
Next, we present the suboptimality and infeasibility convergence rate statements for the proposed algorithm.
\begin{theorem}[Suboptimality and infeasibility rate results]\label{thm:rates}
	Consider Algorithm \ref{alg:IR-IG_avg}. Let Assumption \ref{assum:initial_prob:ineq_constraint} hold. Consider scalars $M,M_f>0$ such that $\|x\|\leq M$ and $|f(x)| \leq M_f \text{ for all } x \in X.$ Let $\bar{x}_N$ be generated by Algorithm \ref{alg:IR-IG_avg} after $N$ iterations. Let $\{\gamma_{k}\}$ and $\{\eta_{{k}}\}$ be the stepsize and regularization parameter sequences generated using $	\gamma_{k} = \tfrac{\gamma_{0}}{\sqrt{1+k}},\	\eta_{k} = \tfrac{\eta_{0}}{(1+k)^b}$, where $\gamma_{{0}}, \eta_{{0}} > 0,$ and $0< b <0.5$. Then, for any optimal solution $x^*$ to problem \eqref{initial_problem}, we have:
	\begingroup \allowdisplaybreaks
	\begin{align}\label{eqn:rate_f}
		(a) \  	& {f\left(\bar x_N\right)}-f(x^*)  \leq\tfrac{2-r}{\gamma_0^r(N+1)^{0.5-b}}\left(\tfrac{2M^2}{\eta_0\gamma_0^{1-r}}\right.\nonumber\\
		&\left.+\tfrac{\left(m+1\right)\gamma_0^{1+r}\left(C+\eta_0C_f\right)^2}{2m\eta_0(0.5-0.5r+b)}\right).\\
		(b)\  & \phi\left(\bar{x}_N\right)\nonumber 
		\leq \tfrac{(2-r)}{(N+1)^{b}} \left(\tfrac{2M^2}{\gamma_0} + \tfrac{2M_f \eta_0 }{(1-0.5r-b)} \right.\nonumber\\
		& \left. +\tfrac{\left(m+1\right)\left(C+\eta_0C_f\right)^2\gamma_0}{2m(0.5-0.5r)}   \right).\label{eqn:rate_infeas}
	\end{align}
	\endgroup
%
\allowdisplaybreaks
\end{theorem}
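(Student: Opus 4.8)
The plan is to feed the prescribed sequences $\gamma_k=\gamma_0(1+k)^{-1/2}$ and $\eta_k=\eta_0(1+k)^{-b}$ directly into the two error bounds of Proposition~\ref{prop:error_bound_aIR-IG}, and to reduce every quantity appearing there to a power sum $\sum_{k=0}^{N}(k+1)^{-\alpha}$ that is controlled by Lemma~\ref{lem:harmonic_series_bound}. Observe first that with $0\le r<1$ and $0<b<0.5$ both sequences are nonincreasing and strictly positive, so Proposition~\ref{prop:error_bound_aIR-IG} applies, and the scalars $M,M_f$ exist by compactness of $X$ (Assumption~\ref{assum:initial_prob:ineq_constraint}(c)) and continuity of $f$. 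The only arithmetic input beyond this is the bookkeeping of the exponents of $(N+1)$ together with the verification that each exponent $\alpha$ handed to Lemma~\ref{lem:harmonic_series_bound} lies in $[0,1)$.

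For part (a) I would first lower-bound the normalizing factor: $\sum_{k=0}^{N}\gamma_k^r=\gamma_0^r\sum_{k=0}^{N}(k+1)^{-r/2}$ with exponent $r/2\in[0,1/2)$, so Lemma~\ref{lem:harmonic_series_bound} gives $\big(\sum_{k=0}^{N}\gamma_k^r\big)^{-1}\le (2-r)\,\gamma_0^{-r}(N+1)^{-(1-r/2)}$. Direct substitution gives $\gamma_N^{r-1}/\eta_N=\gamma_0^{r-1}\eta_0^{-1}(N+1)^{(1-r)/2+b}$, and $\gamma_k^{r+1}/\eta_k=\gamma_0^{r+1}\eta_0^{-1}(k+1)^{-\alpha}$ with $\alpha=(r+1)/2-b$; since $0<0.5-b\le\alpha<1$, the upper bound of Lemma~\ref{lem:harmonic_series_bound} bounds that sum by $\gamma_0^{r+1}\eta_0^{-1}(1-\alpha)^{-1}(N+1)^{1-\alpha}$ with $1-\alpha=0.5-0.5r+b$. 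Plugging these three estimates into Proposition~\ref{prop:error_bound_aIR-IG}(a), the $(N+1)$-exponent of each of the two resulting terms collapses to $(0.5-0.5r+b)-(1-r/2)=b-0.5$, and collecting the $\gamma_0$, $\eta_0$, and $m$ factors (using $1+\tfrac1m=\tfrac{m+1}{m}$) reproduces exactly \eqref{eqn:rate_f}.

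For part (b) I would reuse the same lower bound on $\sum_{k=0}^{N}\gamma_k^r$ and the closed form $\gamma_N^{r-1}=\gamma_0^{r-1}(N+1)^{(1-r)/2}$, and estimate the remaining two sums by Lemma~\ref{lem:harmonic_series_bound}: $\sum_{k=0}^{N}\gamma_k^r\eta_k=\gamma_0^r\eta_0\sum_{k=0}^{N}(k+1)^{-(r/2+b)}$ with $r/2+b\in[0,1)$, and $\sum_{k=0}^{N}\gamma_k^{r+1}=\gamma_0^{r+1}\sum_{k=0}^{N}(k+1)^{-(r+1)/2}$ with $(r+1)/2\in[1/2,1)$. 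Substituting into Proposition~\ref{prop:error_bound_aIR-IG}(b), the middle (feasibility-versus-regularization) term carries the $(N+1)$-exponent $-b$, while the other two terms carry the exponent $-1/2$; since $b<1/2$ and $N+1\ge1$ we may weaken $(N+1)^{-1/2}\le (N+1)^{-b}$ in those two terms, after which factoring out $(2-r)(N+1)^{-b}$ and collecting constants yields \eqref{eqn:rate_infeas}.

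The computations are routine; the only real care is required in the exponent bookkeeping and in checking that every exponent handed to Lemma~\ref{lem:harmonic_series_bound} sits in $[0,1)$ — this is precisely what forces $0<b<0.5$, since it is exactly the condition making $(r+1)/2-b$, $r/2+b$, and $(r+1)/2$ all admissible while $r/2$ stays in $[0,1/2)$. I also expect to need the mild caveat that the lower-bound half of Lemma~\ref{lem:harmonic_series_bound} requires $N$ to exceed the stated threshold, so the rates hold for all sufficiently large $N$; collecting the final constants is then purely mechanical.
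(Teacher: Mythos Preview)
Your proposal is correct and follows essentially the same approach as the paper's proof: both start from the two bounds of Proposition~\ref{prop:error_bound_aIR-IG}, substitute the prescribed $\gamma_k,\eta_k$, reduce every term to a power sum handled by Lemma~\ref{lem:harmonic_series_bound}, verify the exponents lie in $[0,1)$ (which is exactly what forces $0<b<0.5$), and for part~(b) weaken $(N+1)^{-1/2}$ to $(N+1)^{-b}$ before factoring. The paper merely organizes the same computation via the shorthand $\Lambda_{N,1},\dots,\Lambda_{N,6}$ and records the explicit threshold $N\ge 2^{2/(1-r)}-1$ for Lemma~\ref{lem:harmonic_series_bound}, which is the ``mild caveat'' you already anticipated.
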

\begin{proof} 
	Taking Proposition \ref{prop:error_bound_aIR-IG} (a) and (b) into account,
	let us define the following terms: 
	\begin{align*}
		& \Lambda_{N,1} \triangleq \textstyle\sum_{k=0}^{N}\gamma_k^r, \quad  \Lambda_{N,2}\triangleq  \tfrac{2M^2\gamma_{{N}}^{r-1}}{\eta_{N}} , \\ & \Lambda_{N,3} \triangleq \left(1+\tfrac{1}{m}\right)\tfrac{\left(C+\eta_0C_f\right)^2}{2}\textstyle\sum_{k=0}^{N}\eta_k^{-1}\gamma_k^{r+1}, 
\\&  \Lambda_{N,4} \triangleq 2M^2\gamma_{{N}}^{r-1}, \quad\Lambda_{N,5} \triangleq 2M_f	 \textstyle\sum_{k=0}^{N}\eta_k\gamma_k^r, \\ &    \Lambda_{N,6} \triangleq \left(1+\tfrac{1}{m}\right)\tfrac{\left(C+\eta_0C_f\right)^2}{2}\textstyle\sum_{k=0}^{N}\gamma_k^{r+1}. 
	\end{align*}
	From Proposition \ref{prop:error_bound_aIR-IG} (a) and (b), we have:
	\begin{align}\label{eqn:bounds_in_Lambdas_f}
		{f\left(\bar x_N\right)}-f(x^*) &\leq \left({\Lambda_{N,2}+\Lambda_{N,3}}\right)/{\Lambda_{N,1}}, \nonumber\\
		\phi\left(\bar{x}_N\right)&\leq  \left(\Lambda_{N,4}+\Lambda_{N,5}+\Lambda_{N,6}\right)/{\Lambda_{N,1}}.
	\end{align}
	Next, applying Lemma \ref{lem:harmonic_series_bound} and substituting $\{\gamma_k\}$ and $\{\eta_k\}$ by their update rules, we obtain:
	\begin{align*}
		\Lambda_{N,1} &=\textstyle\sum_{k=0}^{N}\tfrac{\gamma_0^r}{(k+1)^{0.5r}}\geq \tfrac{\gamma_0^r(N+1)^{1-0.5r}}{2(1-0.5r)}.\\  
		\Lambda_{N,2} &= \tfrac{2M^2(N+1)^{0.5(1-r)+b}}{\eta_0\gamma_0^{1-r}}.\\&\hspace{-0.7cm} 	\Lambda_{N,3} = \left(1+\tfrac{1}{m}\right)\tfrac{\left(C+\eta_0C_f\right)^2}{2}\textstyle\sum_{k=0}^{N}\tfrac{\gamma_0^{1+r}}{\eta_0(k+1)^{0.5(1+r)-b}}\\
		 \Lambda_{N,3}&\leq\tfrac{\left(m+1\right)\gamma_0^{1+r}\left(C+\eta_0C_f\right)^2(N+1)^{1-0.5(1+r)+b}}{2m\eta_0(1-0.5(1+r)+b)}.
		 	\end{align*}
	 \begin{align*}
		\Lambda_{N,4} &= \tfrac{2M^2(N+1)^{0.5(1-r)}}{\gamma_0^{1-r}}. \\
		\Lambda_{N,5} &=\textstyle\sum_{k=0}^{N}\tfrac{2M_f \eta_0\gamma_0^r}{(k+1)^{0.5r+b}}\leq  \tfrac{2M_f \eta_0\gamma_0^r (N+1)^{1-0.5r-b}}{1-0.5r-b}.\\
		\Lambda_{N,6} &= \left(1+\tfrac{1}{m}\right)\tfrac{\left(C+\eta_0C_f\right)^2}{2}\textstyle\sum_{k=0}^{N}\tfrac{\gamma_0^{r+1}}{(k+1)^{0.5(1+r)}}\\&\hspace{-0cm}\leq \tfrac{\left(m+1\right)\left(C+\eta_0C_f\right)^2\gamma_0^{r+1}(N+1)^{1-0.5(1+r)}}{2m(1-0.5(1+r))}.
	\end{align*} 
For these inequalities to hold, we need to ensure that conditions of Lemma \ref{lem:harmonic_series_bound} are met. Accordingly, we must have $0\leq 0.5r <1$, $0\leq 0.5(1+r)-b <1$, $0\leq 0.5r+b<1$, and $0 \leq 0.5(1+r)<1$. These relations hold because $0\leq r<1$ and $0<b<0.5$. Another set of conditions when applying Lemma \ref{lem:harmonic_series_bound} includes $N\geq \max\left\{2^{1/(1-0.5r)},2^{1/(1-0.5(1+r)+b)},2^{1/(1-0.5r-b)},\right.$ \\ $\left.  2^{1/(1-0.5(1+r))}\right\}-1$. Note that this condition is satisfied as a consequence of $N\geq 2^{\tfrac{2}{1-r}}-1$, $b>0$, and $0\leq r<1$. We conclude that all the necessary conditions for applying Lemma \ref{lem:harmonic_series_bound} and obtaining the aforementioned bounds for the terms $\Lambda_{N,i}$ are satisfied. To show that the inequalities \eqref{eqn:rate_f} and \eqref{eqn:rate_infeas}, it suffices to substitute the preceding bounds of  $\Lambda_{N,i}$, in the inequalities \eqref{eqn:bounds_in_Lambdas_f}.
\begin{align*}
		& \hspace{-0.05cm}{f\left(\bar x_N\right)}\hspace{-0.05cm}-\hspace{-0.08cm}f(x^*) \leq  \tfrac{2-r}{\gamma_0^r(N+1)^{1-0.5r}}\hspace{-0.05cm}\left(\hspace{-0.1cm}\tfrac{2M^2(N+1)^{0.5-0.5r+b}}{\eta_0\gamma_0^{1-r}}\right.\\
		&\qquad\quad\left.+\tfrac{\left(m+1\right)\gamma_0^{1+r}\left(C+\eta_0C_f\right)^2(N+1)^{0.5-0.5r+b}}{2m\eta_0(1-0.5(1+r)+b)}\right).
	\end{align*}
	Inequality \eqref{eqn:rate_f} is obtained by rearranging the terms in the preceding relation. Next, consider the following:
	\begin{align*}
		\phi\left(\bar{x}_N\right) \nonumber\leq&\tfrac{2-r}{\gamma_0^r(N+1)^{1-0.5r}}\left( \tfrac{2M_f \eta_0\gamma_0^r (N+1)^{1-0.5r-b}}{1-0.5r-b} \right.\\&\left.+ \tfrac{2M^2(N+1)^{0.5-0.5r}}{\gamma_0^{1-r}}\right.\\&\left. +\tfrac{\left(m+1\right)\left(C+\eta_0C_f\right)^2\gamma_0^{r+1}(N+1)^{0.5-0.5r}}{2m(1-0.5(1+r))} \right),\nonumber\\
		\leq& (2-r)\left(\tfrac{2M^2}{\gamma_0(N+1)^{0.5}} + \tfrac{2M_f \eta_0 }{(1-0.5r-b)(N+1)^{b}} \right.\nonumber\\
		&\qquad\qquad\left. +\tfrac{\left(m+1\right)\left(C+\eta_0C_f\right)^2\gamma_0}{2m(0.5-0.5r)(N+1)^{0.5}} \right).
		\end{align*}
		Taking into account $0<b<0.5$, 
	equation \eqref{eqn:rate_infeas} is obtained by rearranging the terms in the preceding inequality.
\end{proof} 

\begin{remark} \normalfont The convergence rates derived in Theorem \ref{thm:rates} can be improved under stronger assumptions such as smoothness and strong convexity of the functions $f_i$. Indeed, this is a future direction of our study. We note that a preliminary version of this paper where a better rate has been derived under such assumptions is \cite{KaushikYousefian2020}. We have omitted such discussions due to the space limitation.
\end{remark}


\section{Numerical Results} \label{sec:numerical_implementation}

%

\begin{table*}[t]
	\renewcommand\thetable{1}
	\captionsetup{labelformat=empty}
	\centering
	\begin{tabular}{c|c|c}
		$ {N} \backslash {n}$  & 50 & 100 \\
		\hline
		100&\begin{minipage}{7.5cm}\centering\includegraphics[width=0.5\textwidth]{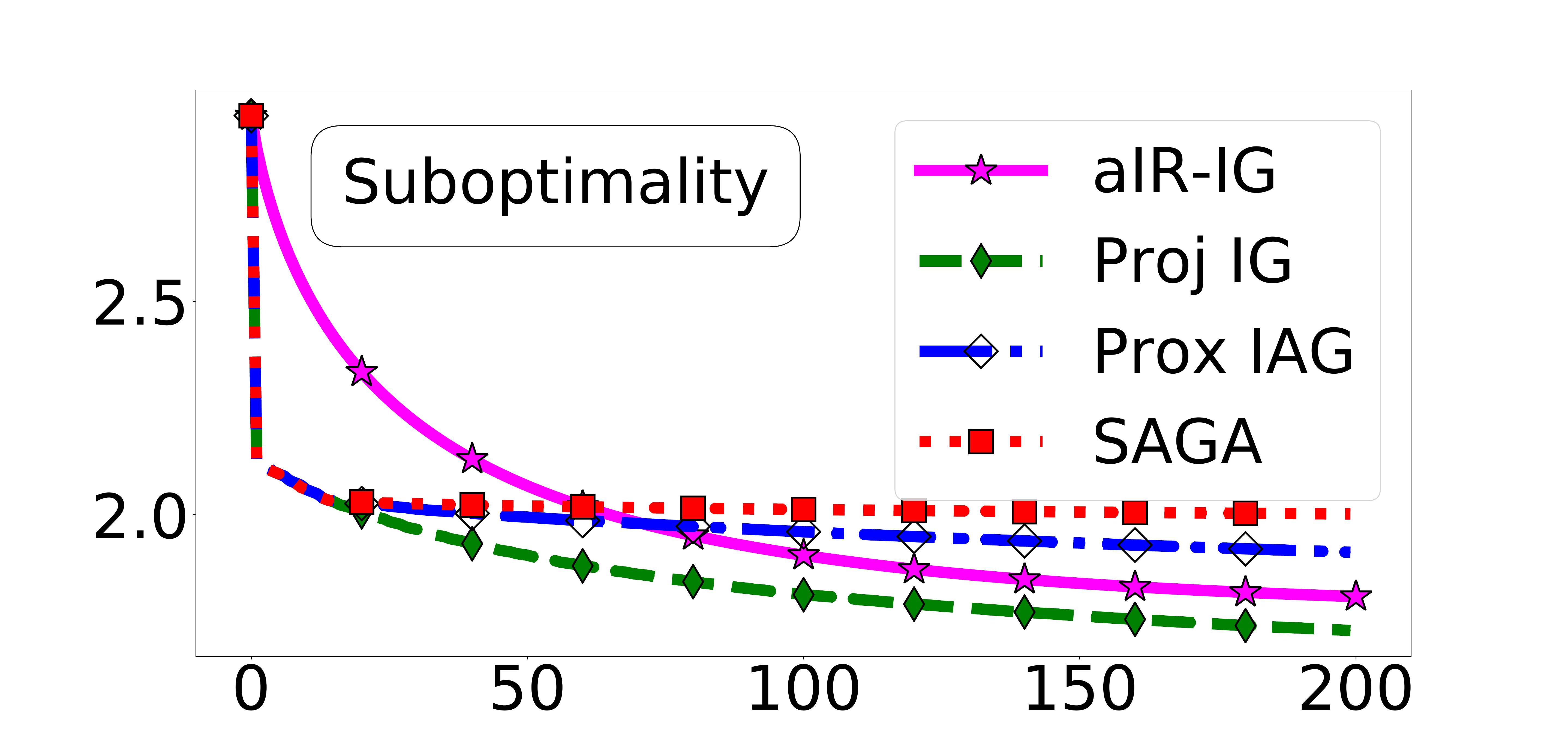}\includegraphics[width=0.5\textwidth]{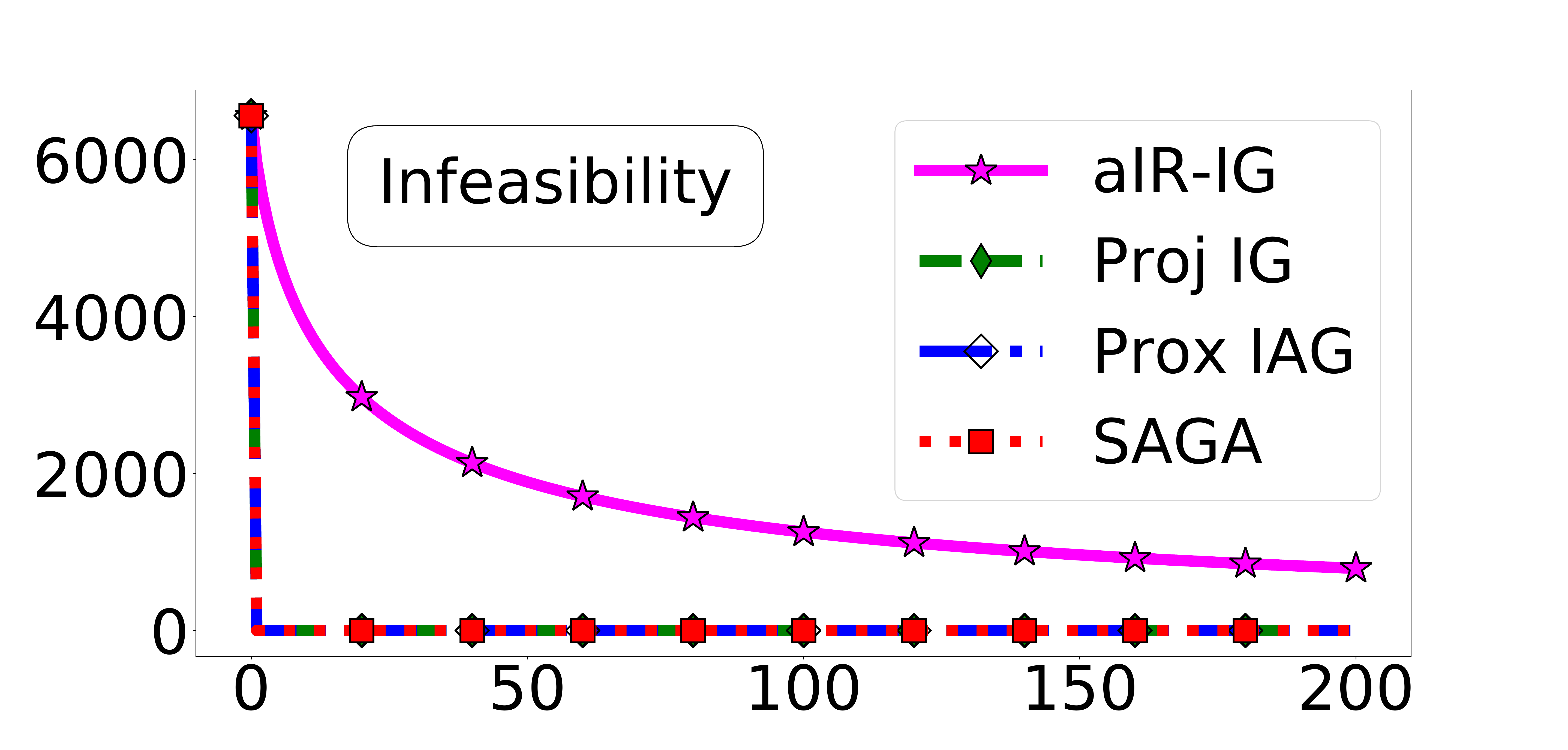}\end{minipage}
		&\begin{minipage}{7.5cm}\centering\includegraphics[width=0.5\textwidth]{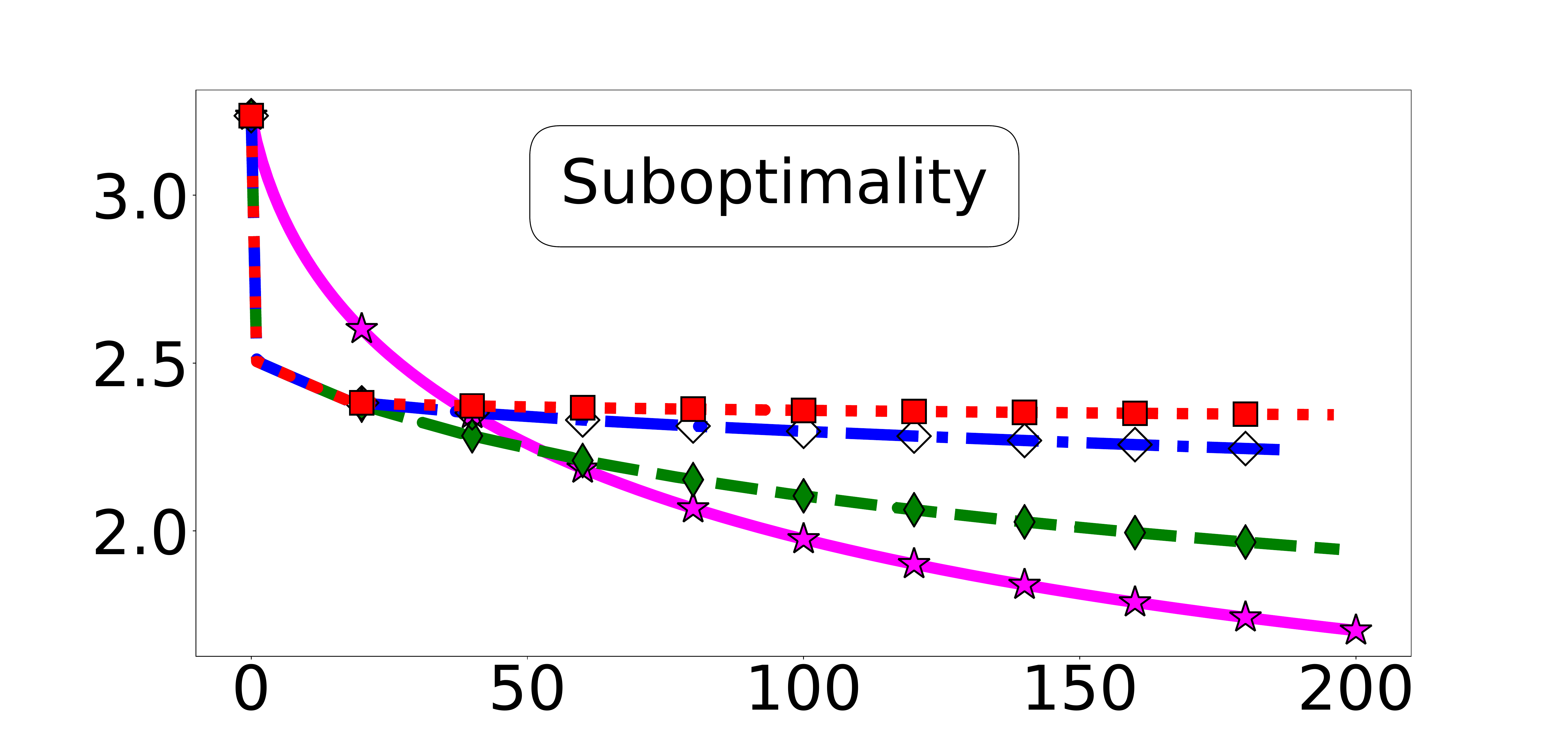}\includegraphics[width=0.5\textwidth]{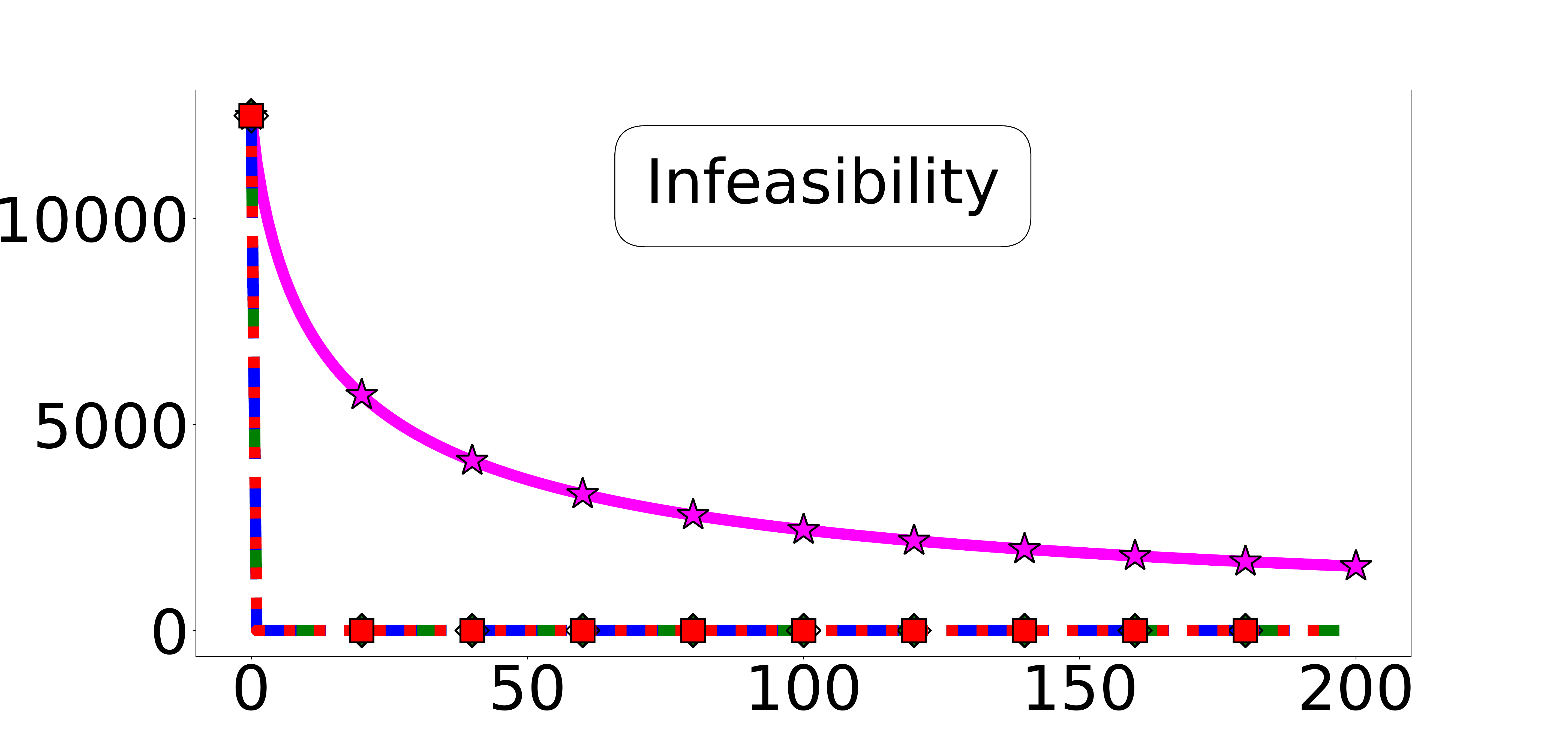}\end{minipage}
		\\\hline 200 &\begin{minipage}{7.5cm}\centering\includegraphics[width=0.5\textwidth]{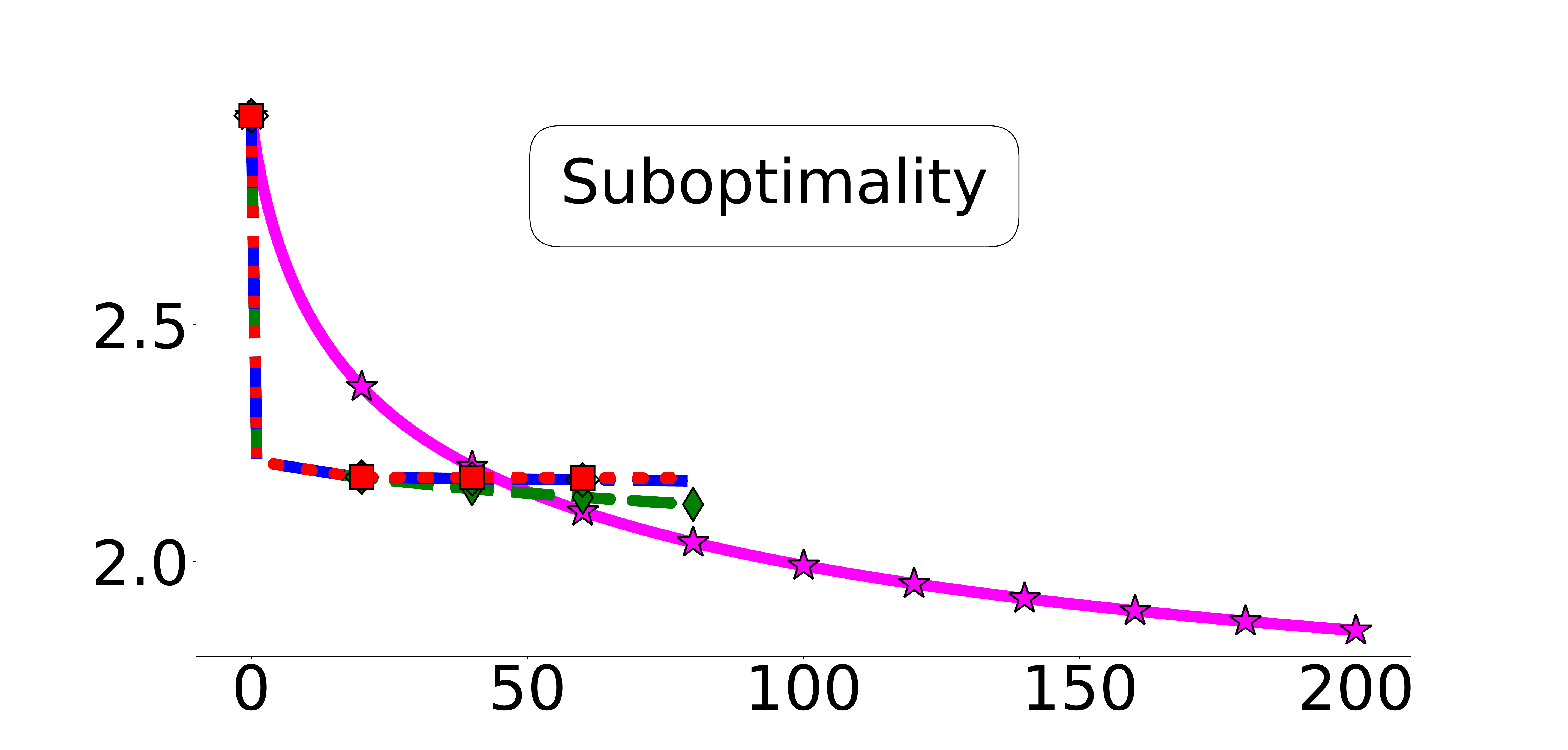}\includegraphics[width=0.5\textwidth]{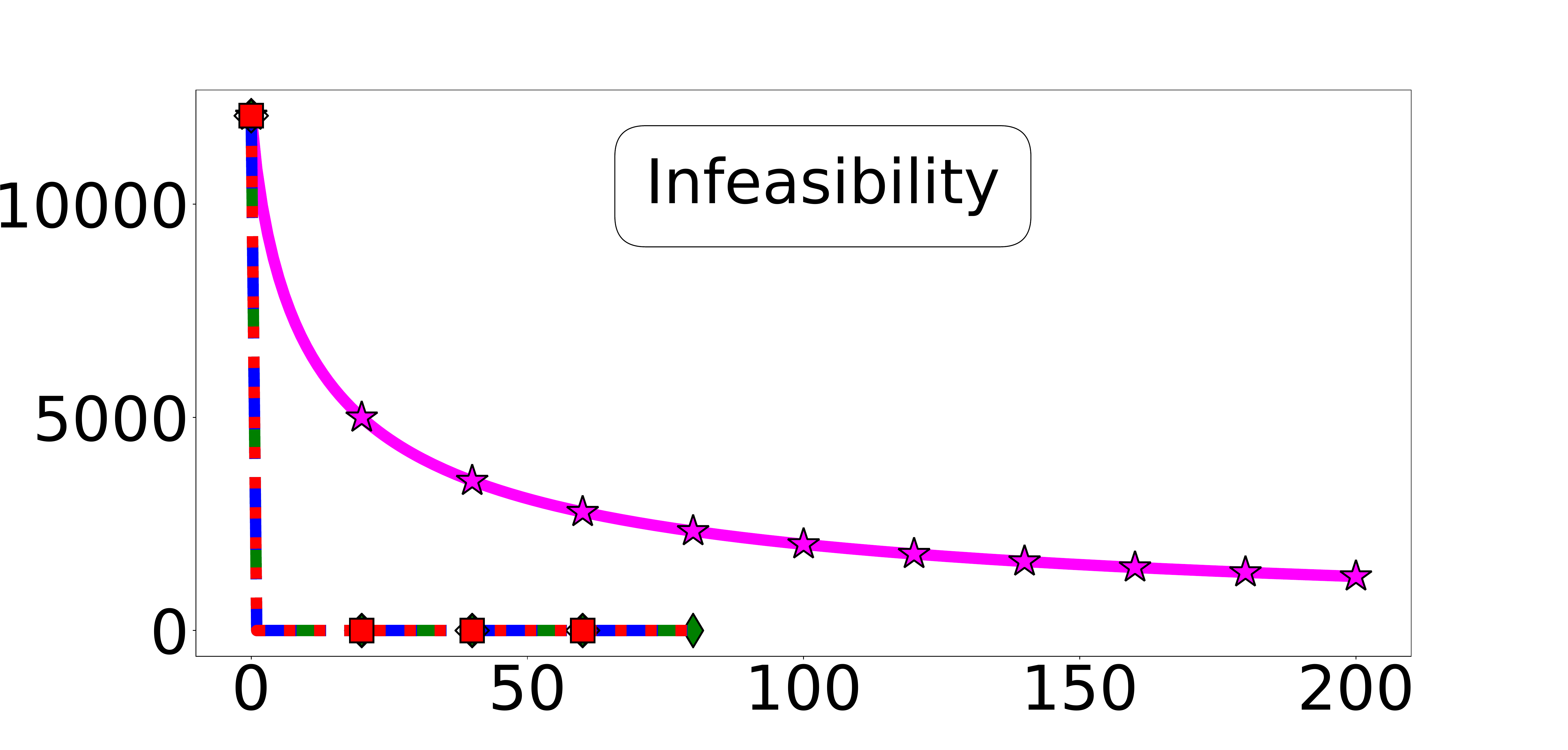}\end{minipage}
		&\begin{minipage}{7.5cm}\centering\includegraphics[width=0.5\textwidth]{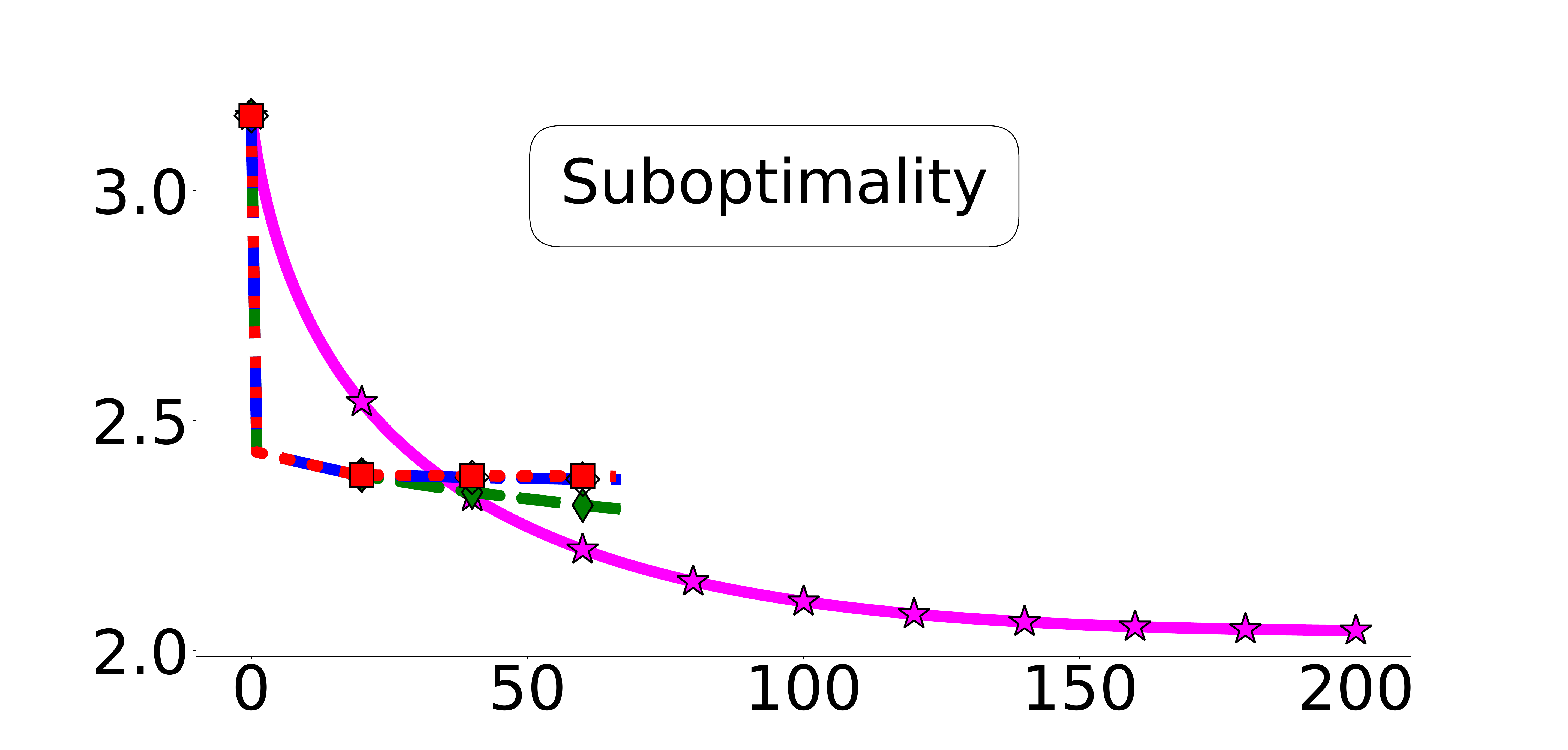}\includegraphics[width=0.5\textwidth]{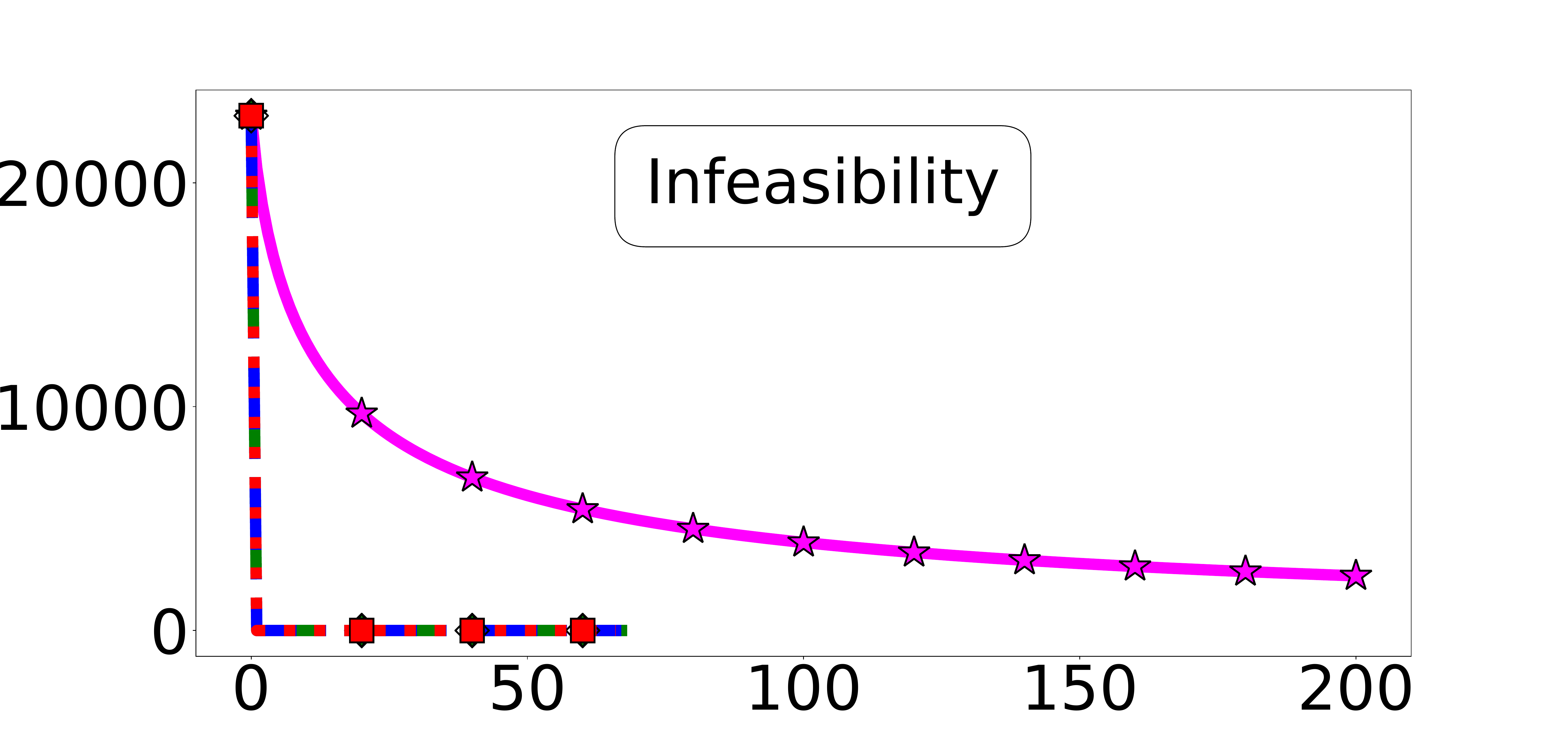}\end{minipage}
		\\\hline 500 &\begin{minipage}{7.5cm}\centering\includegraphics[width=0.5\textwidth]{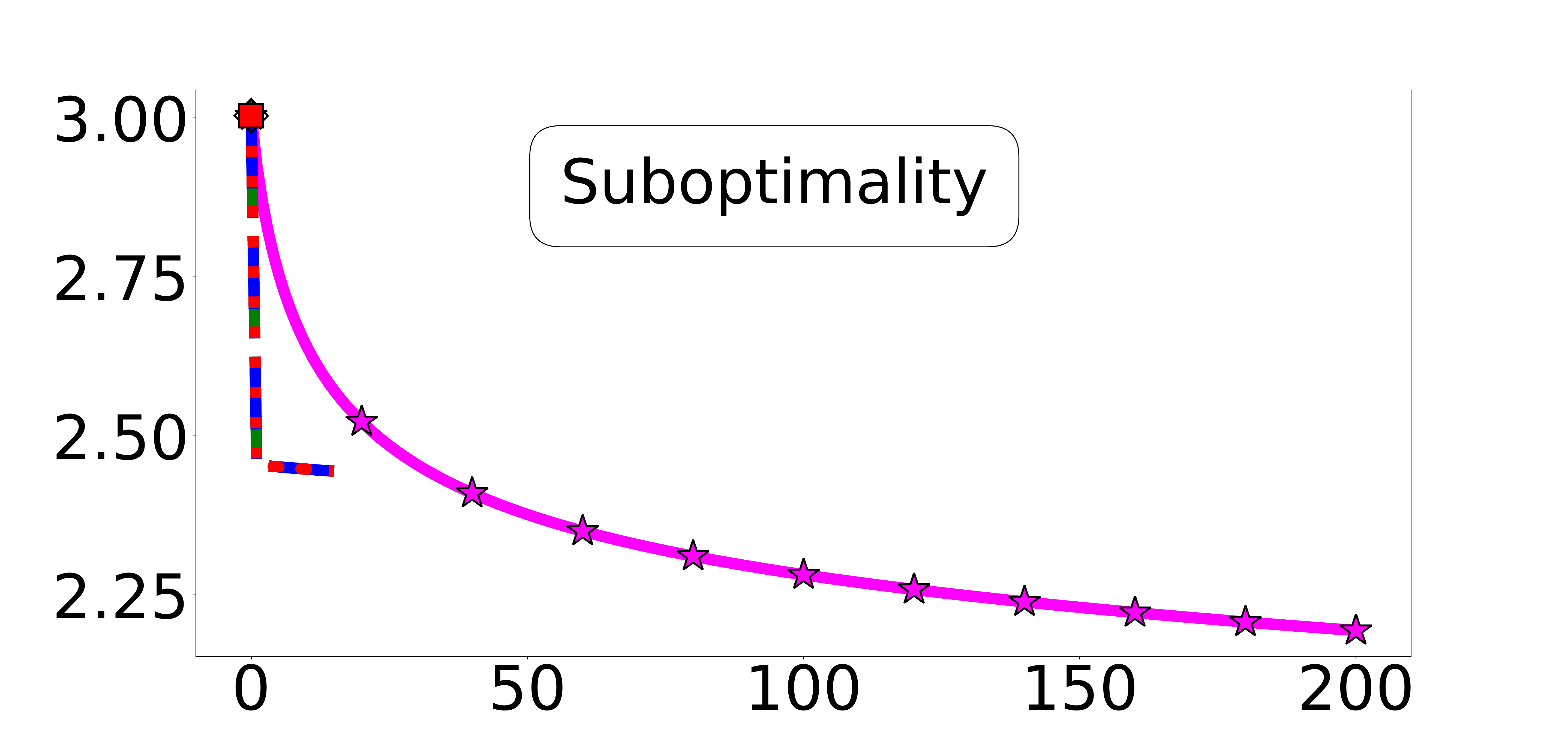}\includegraphics[width=0.5\textwidth]{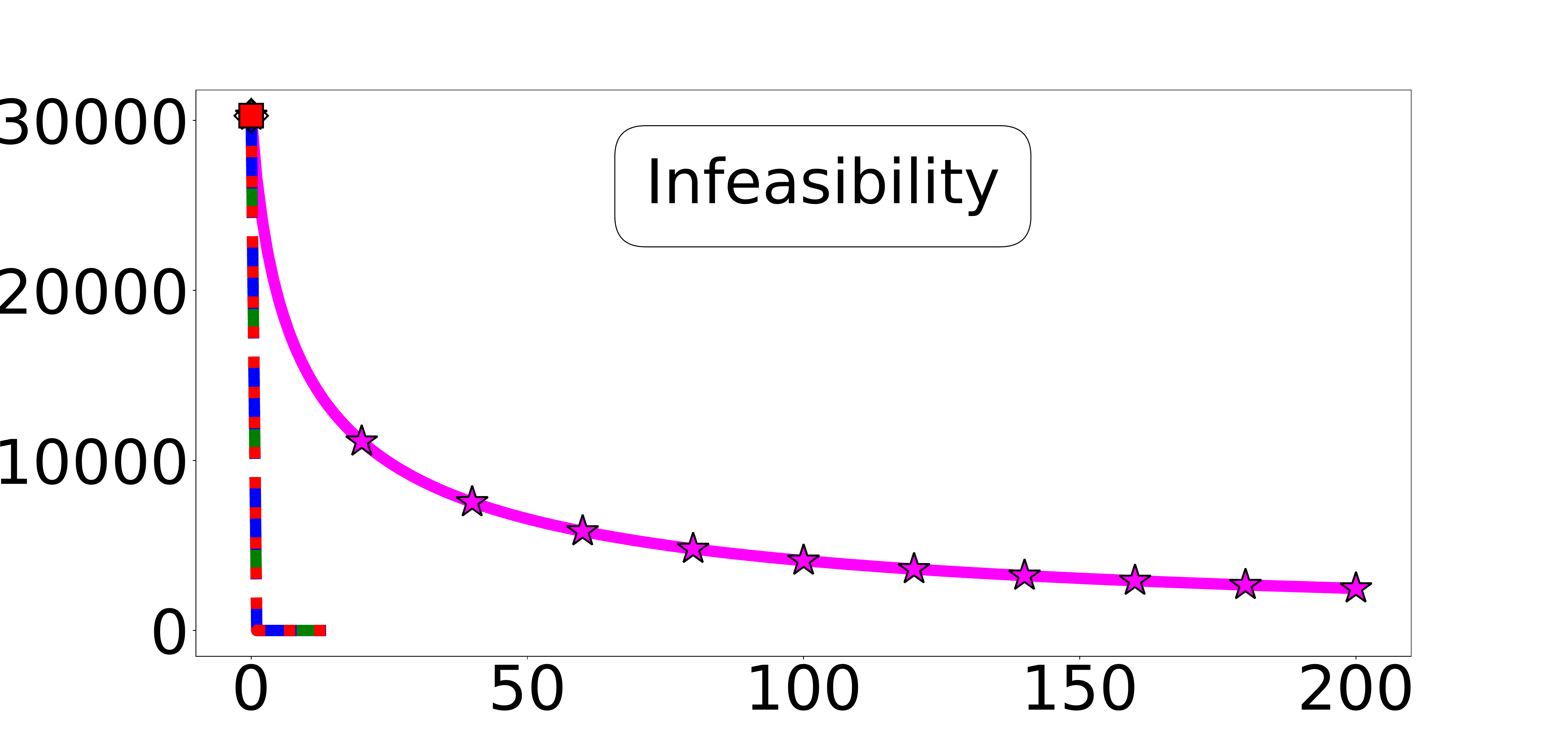}\end{minipage}
		&\begin{minipage}{7.5cm}\centering\includegraphics[width=0.5\textwidth]{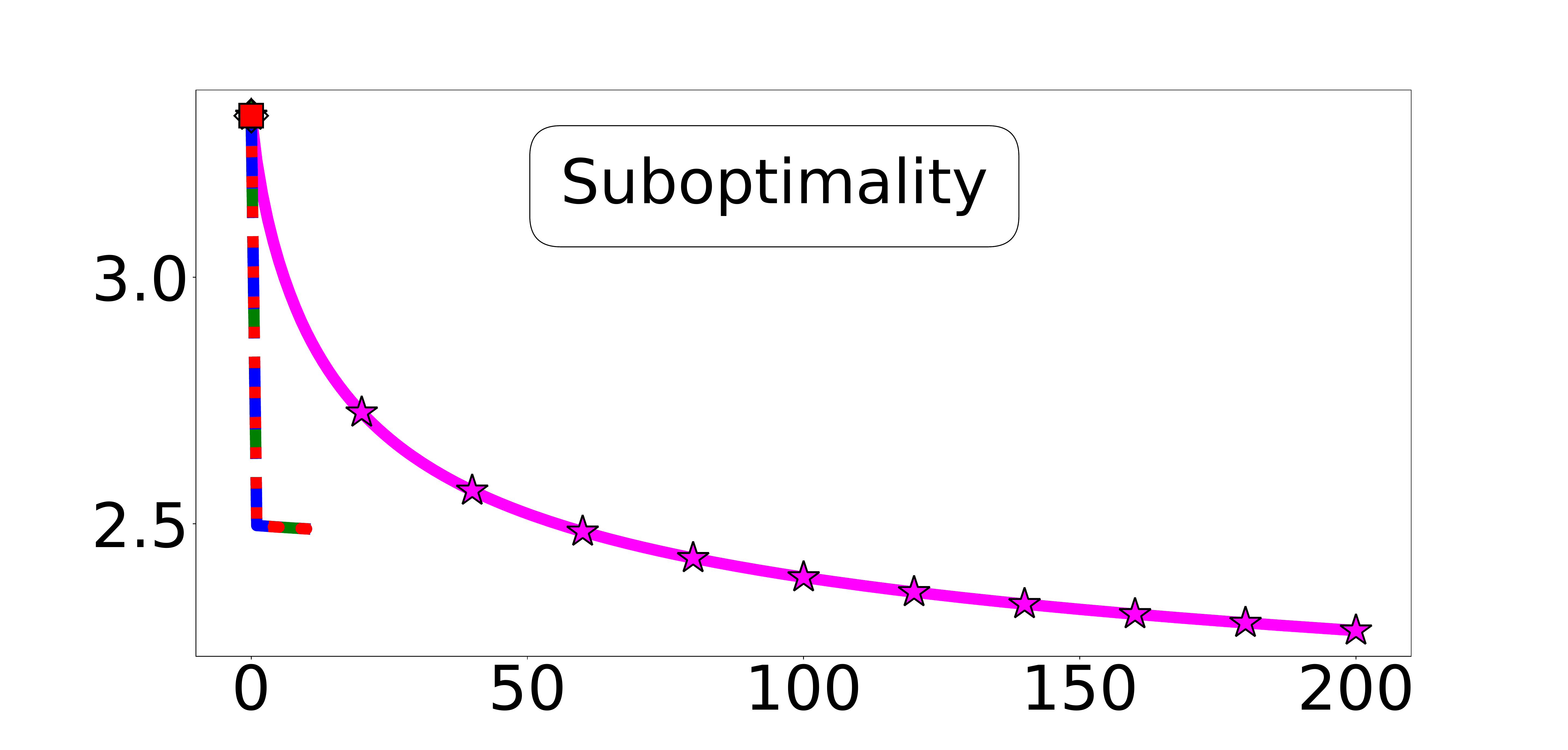}\includegraphics[width=0.5\textwidth]{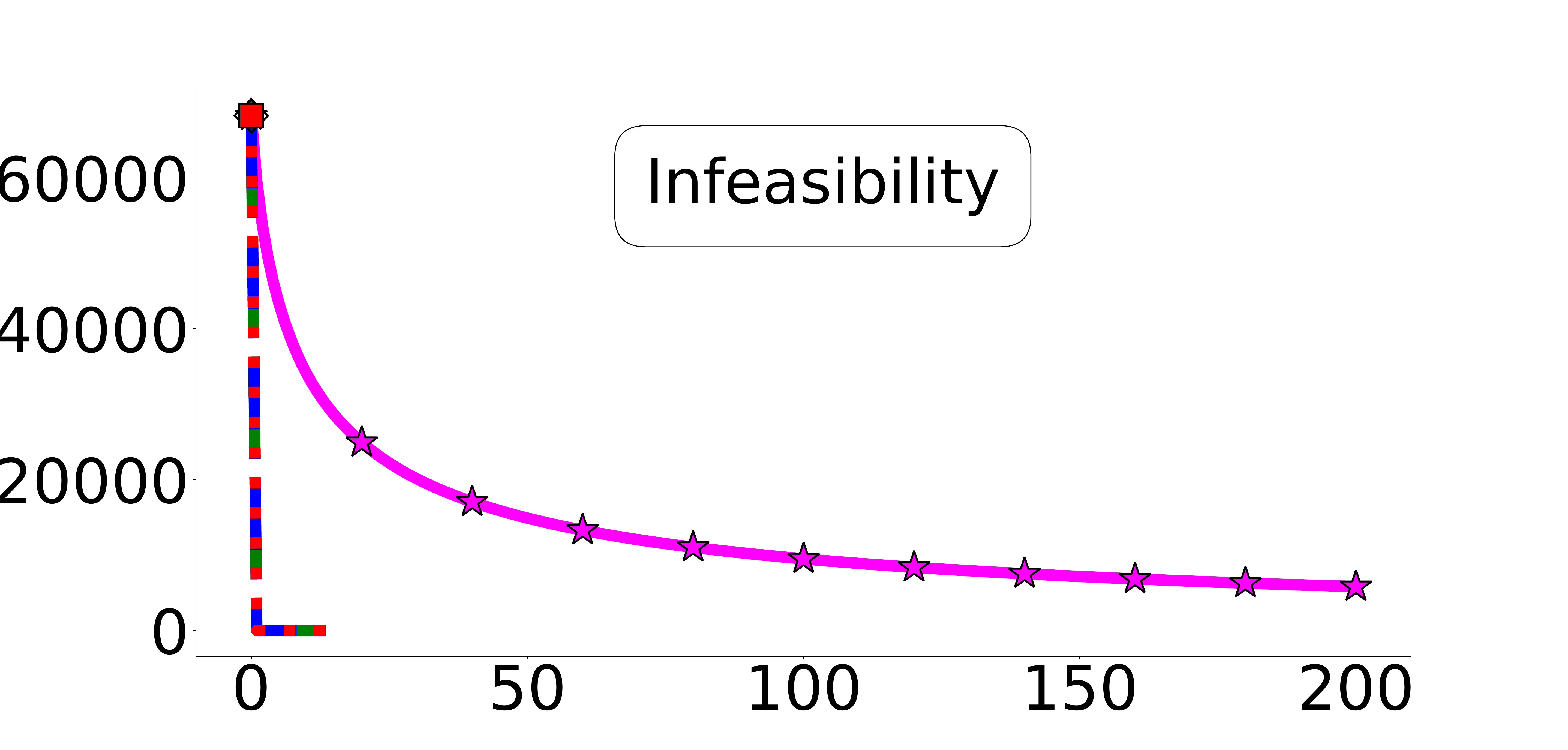}\end{minipage}\\\hline
	\end{tabular}
	\caption{\label{tab:function_value}{Figure 1}: Comparison of suboptimality and infeasibility of Algorithm \ref{alg:IR-IG_avg}, projected IG, proximal IAG, and SAGA  \hdk{over} time.}
\end{table*}


In this section, we present the simulations for the proposed algorithm on  a distributed soft-margin support vector machine (SVM). We compare the performance of aIR-IG with the state-of-the-art IG schemes, \hdk{including the} projected IG, proximal IAG, and SAGA. The schemes are compared in terms of CPU time. For \hdk{these} numerical experiments, we  use the soft-margin formulation of SVM, as follows: 
\begin{align}\label{prob:numerics}
	    & \underset{w,b,\hdk{z}}{\text{minimize}} \qquad  \tfrac{1}{2}\|w \|^2 + \tfrac{1}{\lambda}\textstyle\sum_{i =1}^N z_i \\
	& \text{subject to}\quad   v_i(\hdk{w^T}{u}_i + b)   \geq 1 - z_i \quad\ \text{ for } \ i = 1, \dots, N.\nonumber\\
	& \qquad \qquad\quad \ \ \qquad \qquad  z_i \geq 0  \quad  \qquad  \ \text{ for } \ i = 1, \dots, N\nonumber.
\end{align}
Here,  $ \left( u_1, v_1 \right), \left( u_2, v_2 \right), \dots, \left( u_N, v_N \right)$ denote the  dataset  such that  $ u\in \mathbb{R}^n $ and $ v\in\{-1,+1\}$. The goal here is to find a classifier given by $ w^Tu + b $  to separate the two  classes of  $\ v := +1 \text{ and }  v := -1$, whereas $w \in \mathbb{R}^n  \text{ and } b \in \mathbb{R}.$ For a distributed implementation, we define the objective for agent $i  \in \{1, \dots, m\} $ as follows: 
\begin{align*}
f_i(w,z_i) =  \textstyle\sum_{j = \tfrac{N\times (i-1)}{m} +1}^{\hdk{\tfrac{N}{m}\times i}} \tfrac{1}{2N}\| w \|^2 + \tfrac{1}{\lambda}z_j.
\end{align*}
Recall that Algorithm \ref{alg:IR-IG_avg} does not require any projection onto the feasible set. However, in other schemes including IG, proximal IAG,  and SAGA a projection (more generally a proximal step) is needed. For convenience, define $ x \triangleq (w^T,b,z^T)^T$. Now for evaluating \hdk{the} projection of vector $x_1 \triangleq (w_1^T,b_1,z_1^T)^T$, we solve the following optimization problem: 
\begin{align}\label{prob:projection}
 \underset{w,b,\hdk{z}}{\text{min}}  \left\{ \tfrac{\|x - x_1 \|^2}{2} \bigg\vert
    v_i(\hdk{w^T}{u}_i + b)   \geq 1 - z_i,   z_i \geq 0  \ \forall     i \in [N]\right\}
\end{align}

{\bf Simulation Platform. }  All the  simulations are implemented using Python on a computer with 16 GB RAM. We use \hdk{the} Gurobi-Python interface to solve  projection problem \eqref{prob:projection}.

{\bf Set up.} The simulations were performed for $m=20$ agents,  $\lambda = 10$, $\gamma_{{0}} = \eta_{{0}} = 1$, and $b=0.25$. For this experiment, time was fixed to $200$ seconds and the performance of each scheme is recorded.   Figure \ref{tab:function_value} shows the performance of Algorithm \ref{alg:IR-IG_avg}, projected IG, proximal IAG, and SAGA for the different choices of dimensionality $n$ and the total number of samples $N$. Performance is recorded in terms of suboptimality and infeasibility where suboptimality is  $\tfrac{1}{2}\|w \|^2 + \tfrac{1}{\lambda}\textstyle\sum_{i =1}^N z_i$ and infeasibility is the violation of constraints of problem \eqref{prob:numerics}. Suboptimality is shown in a logarithmic scale in Figure  \ref{tab:function_value}. 

{\bf Insights. }  With increasing the dimension and the number of samples, the projection evaluations {take} longer and consequently, the performance of the projected variant of the aforementioned IG schemes is deteriorated. This is the case in particular when $N=500$. \hdk{Note that the other schemes, namely Proj IG, Prox IAG, and SAGA do not show any update for $N =$ 200 and 500 after about 70 seconds and 20 seconds, respectively. This is because of the interruption in their last update due to reaching the time limit of 200 seconds.}

\section{Concluding remarks}We consider the problem of minimizing the \hdk{finite sum with separable (agent-wise) nonlinear inequality and linear equality and inequality constraints.} Our work is motivated by the computational challenges in \hdk{the} projected incremental gradient schemes under the presence of hard-to-project constraints. We develop an averaged iteratively regularized incremental gradient scheme where we employ a novel regularization-based relaxation technique. The proposed algorithm is designed in a way that it does not require a hard-to-project computation. We establish the rates of convergence for the objective function value and the infeasibility of the generated iterates.  We compare  the proposed scheme with  the state-of-the-art incremental gradient schemes including projected IG, proximal IAG, and SAGA. We observe that the proposed scheme outperforms the projected schemes as the number of samples or the dimension of the solution space increases.







%

\bibliographystyle{IEEEtran}
\bibliography{ref_fy_hdk,references_hdk}

\end{document}